%%%%%%%%%%%%%%%%%%%%%%%%%%%%%%%%%%%%%%%%%%%%%%%%%%%%%%%%%%%%%%%%%%%%%%%%%%%%%%%%
%2345678901234567890123456789012345678901234567890123456789012345678901234567890
%        1         2         3         4         5         6         7         8

\documentclass[letterpaper, 10 pt, conference]{ieeeconf}  % Comment this line out if you need a4paper

\IEEEoverridecommandlockouts                              % This command is only needed if 
                                                          % you want to use the \thanks command

\overrideIEEEmargins                                      % Needed to meet printer requirements.

%In case you encounter the following error:
%Error 1010 The PDF file may be corrupt (unable to open PDF file) OR
%Error 1000 An error occurred while parsing a contents stream. Unable to analyze the PDF file.
%This is a known problem with pdfLaTeX conversion filter. The file cannot be opened with acrobat reader
%Please use one of the alternatives below to circumvent this error by uncommenting one or the other
%\pdfobjcompresslevel=0
%\pdfminorversion=4

% See the \addtolength command later in the file to balance the column lengths
% on the last page of the document

% The following packages can be found on http:\\www.ctan.org
\usepackage{graphics} % for pdf, bitmapped graphics files
\usepackage{amsmath} % assumes amsmath package installed
\usepackage{amssymb}  % assumes amsmath package installed
\usepackage{xcolor}
\usepackage{amsthm}
\usepackage{graphicx}
\usepackage{arydshln}
\usepackage[font=small,skip=-3pt]{subcaption}
\usepackage[font=footnotesize,skip=2pt]{caption}

\newtheoremstyle{boldspace}
  {\topsep} % Space above
  {\topsep} % Space below
  {} % Body font
  {} % Indent amount
  {\bfseries} % Theorem head font
  {.} % Punctuation after theorem head
  {.5em} % Space after theorem head
  {} % Theorem head spec (can be left empty, meaning `normal')

\theoremstyle{boldspace} \newtheorem{remark}{Remark}
\theoremstyle{boldspace} \newtheorem{theorem}{Theorem}
\theoremstyle{boldspace} \newtheorem{lemma}{Lemma}
\theoremstyle{boldspace} 
\theoremstyle{boldspace} \newtheorem{corollary}{Corollary}
\theoremstyle{boldspace} 
\theoremstyle{boldspace} 
\theoremstyle{boldspace}

\DeclareMathOperator{\im}{im}
\DeclareMathOperator{\corank}{corank}

\newcommand{\R}{\mathbb R}

\title{\LARGE \bf
Singular networks and ultrasensitive terminal behaviors
}

\author{Alessio Franci$^{1}$, Bart Besselink$^{2}$, and Arjan van der Schaft$^{2}$% <-this % stops a space% <-this % stops a space
\thanks{$^{1}$Alessio Franci is with the Department of Electrical Engineering and Computer Science, University of Liege, Liege, and with the WEL Research Institute, Wavre, Belgium
        {\tt\small afranci@uliege.be}}%
\thanks{$^{2}$Bart Besselink and Arjan van der Schaft are with the Bernoulli Institute for Mathematics, Computer Science and Artificial Intelligence, University of Groningen, Groningen, the Netherlands. Bart Besselink is also with the Groningen Cognitive Systems and Materials Center (CogniGron), University of Groningen.
        {\tt\small b.besselink@rug.nl}, {\tt\small a.j.van.der.schaft@rug.nl}}%
%\thanks{$^{3}$Arjan van der Schaft
%        {\tt\small }}%
}

\begin{document}

\maketitle
\thispagestyle{empty}
\pagestyle{empty}

%%%%%%%%%%%%%%%%%%%%%%%%%%%%%%%%%%%%%%%%%%%%%%%%%%%%%%%%%%%%%%%%%%%%%%%%%%%%%%%%
\begin{abstract}
	
Negative conductance elements are key to shape the input-output behavior at the terminals of a network through localized positive feedback amplification. The balance of positive and negative differential conductances creates singularities at which rich, intrinsically nonlinear, and ultrasensitive terminal behaviors emerge. Motivated by neuromorphic engineering applications, in this note we extend a recently introduced nonlinear network graphical modeling framework to include negative conductance elements. We use this extended framework to define the class of singular networks and to characterize their ultra-sensitive input/output behaviors at given terminals. Our results are grounded in the Lyapunov-Schmidt reduction method, which is shown to fully characterize the singularities and bifurcations of the input-output behavior at the network terminals, including when the underlying input-output relation is not explicitly computable through other reduction methods.
%, pillars of network and singularity theory, respectively. Notably, we prove an intrinsic mutual consistency between the two reduction methods. Finally, we discuss the relevance of the proposed framework for neuromorphic engineering applications.
\end{abstract}

%%%%%%%%%%%%%%%%%%%%%%%%%%%%%%%%%%%%%%%%%%%%%%%%%%%%%%%%%%%%%%%%%%%%%%%%%%%%%%%%
\section{Introduction}

Characterizing and synthesizing the behavior of resistive networks at given terminals is a fundamental problem of network theory~\cite{bollobas2013modern} with disparate applications~\cite{amin2005realizable,willems2009behavior,wood2013power}. When resistors are linear and their conductance is positive, graph theoretical methods provide a constructive solution to this problem in terms of the network Kron reduction~\cite{dorfler2012kron,van2010characterization}. The generalization of the Kron reduction method to networks of nonlinear resistors with positive differential conductance was recently developed~\cite{van2024kron}.

The goal of this paper is to further generalize, and provide solutions to, the problem of characterizing and synthesizing the terminal behavior of a network by allowing some of the network edges to have negative differential conductance. This generalization is motivated by the broader objective of developing a fundamentally missing physical network theory of excitable behaviors~\cite{sepulchre2018excitable,sepulchre2019control} suitable for neuromorphic engineering applications~\cite{chicca2014neuromorphic,dalgaty2024mosaic,christensen20222022}. 
%The design of neuromorphic systems is at present dominated by a functional, input-output block interconnection approach that leverages physics to realize nonlinear behaviors at the component scale but relies on routed signal-based interconnections at the network scale.
%has largely lost touch with the physical nature of the underlying components (semiconductors, memristors, and capacitors).
Whereas the design of neuromorphic excitable signal-processing units is a mature field~\cite{indiveri2011neuromorphic,ribar2019neuromodulation,castanos2017implementing}, the design of excitable networks is still a largely unexplored one. The possibility of rigorously synthesizing large physical networks with tunable excitable behaviors emerging from the balance of positive and negative conductance elements could pave the way to radically new neuromorphic systems designs. To this aim, in this paper we define signed nonlinear networks, i.e., networks with both positive and negative differential conductance edges, and introduce tools from singularity and bifurcation theory to analyze their terminal behaviors.

%Neuromorphic engineering $\to$ need to introduce negative differential conductances... Mixed-feedback theory ... The goal of this paper ... extension to nonlinear resistive networks ... new ``control by interconnection'' design ideas for neuromorphic circuits beyond the predominant input-output block interconnection approach.  Mention here the connection with Anastasia's work as well.

The paper contributions and its structure are the following. Section~II presents useful preliminaries on signed Laplacians matrices. In this context, we prove a new result on the relation between the corank of a signed Laplacian matrix and that of his Kron reduction. We further introduce the class of singular Laplacian matrices and prove a constructive result to synthesize singular Laplacians with a single negative edge. Section~III leverages the nonlinear network framework of~\cite{van2024kron} to introduce and define signed nonlinear networks. Section~IV defines the class of singular networks and highlights the importance of network parameters in shaping their singular behavior at the given terminals. Section~V shows how Lyapunov-Schmidt reduction and singularity theory methods can be used to characterize the terminal behavior of a singular network, also when this behavior is not explicitly computable. Leveraging these results, we introduce in Section~VI the notions of selectively ultrasensitive terminal behaviors and network bifurcations, and study the bifurcations of networks with a single negative conductance. Finally, Section~VII applies the framework developed in the previous sections to a signed nonlinear network example. Conclusions and perspectives are provided in Section~VIII.

\section{Preliminaries on signed Laplacians}

\subsection{Signed graphs, Laplacians, and their Kron reduction}
\label{sec: signed Laplacians}

A signed graph with $n$ nodes and $m$ edges is defined by an incidence matrix $D\in\R^{n\times m}$ and a {\it signed} weight matrix $W={\rm diag}(w_1,\ldots,w_m)\in\R^{m\times m}$, where $w_j$ can now be both positive or negative. The {\it signed Laplacian} $L$ of a signed graph is defined analogously to the unsigned case,
\begin{equation}\label{eq: signed Laplacian}
	L = D W D^\top\,.
\end{equation}
Observe that also in the signed case $L{\bf 1}_n=0$ and the Laplacian pseudoinverse $L^\dagger$ can be defined as the Moore-Penrose pseudoinverse of $L$~\cite{fontan2023pseudoinverses,chen2020spectral}. %We refer the reader to and references therein for more details about signed Laplacian matrices and their properties.
%In what follows, we summarize some results on signed Laplacian theory that will be instrumental for the analysis developed in this paper.

%Consider the partition $z=(z_B,z_C)$ of $z\in\R^n$ into an $n_B$-dimensional component $z_B$ and an $n_C$-dimensional component $z_C$, with $2\leq n_B\leq n-1$ and $n_B+n_C = n$. \BB{I would prefer to introduce the partitioning on the Laplacian directly, without introducing $z$. The variable $z$ is not yet relevant here.}
Consider the partition of the network nodes into $n_B$ boundary (terminal) nodes and $n_C$ central (internal) nodes, with $n_C+n_B=n$. Then, modulo an index reordering, the Laplacian reads
\begin{equation}\label{eq: signe Laplacian partition}
	L= \begin{bmatrix}
		L_{BB} & L_{BC} \\
		L_{CB} & L_{CC} 
	\end{bmatrix},
\end{equation}
where $L_{BB}\in\R^{n_B\times n_B}$ and $L_{CC}\in\R^{n_C\times n_C}$. Suppose that $L_{CC}$ is nonsingular. The $n_B\times n_B$ Schur complement
\begin{equation}\label{eq: signed Laplacian Schur complement}
	\widehat L = L/L_{CC} = L_{BB} - L_{BC}\, L_{CC}^{-1} L_{BC}
\end{equation}
of the signed Laplacian matrix $L$ is again a signed Laplacian matrix~\cite{dorfler2012kron} and we can factorize $\widehat L=\widehat D \widehat{W}\widehat{D}^\top$ to define a reduced signed graph with $n_{B}$ nodes, $\hat m$ edges, incidence matrix $\widehat D$ and weight matrix $\widehat W$. Also in the signed case, the Laplacian $L$ can be interpreted as the linear relation $L z = u$ between the vector of node potentials $z=(z_B,z_C)\in\R^n$ and the vector of nodal currents $u=(u_B,u_C)\in\R^n$ of a signed resistive network in which the conductance of the edge $j$ between node $i$ and node $k$ is $w_j\in\R$. The Schur complement $L/L_{CC}$ is the network {\it signed Kron reduction} associated with the constraint $u_{C}=0$. The following result is new and instrumental to our analysis.

\begin{lemma}\label{lem: L hatL corank}
    Assume that $L_{CC}$ is nonsingular. The signed Laplacian $L$ and its Kron reduction $\widehat L=L/L_{CC}$ satisfy\footnote{Recall that, for a matrix $M$, $\corank M=\dim\ker M$.}
    \begin{equation}\label{eq: L hatL corank}
        \corank L=\corank\widehat L\,.
    \end{equation}
\end{lemma}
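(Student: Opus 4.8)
The plan is to bypass any explicit computation of the input-output relation and instead exhibit a direct linear isomorphism between $\ker L$ and $\ker\widehat L$, from which the equality of dimensions, and hence of coranks, follows at once. The guiding observation is that the whole argument is purely linear-algebraic and never uses the signs of the weights $w_j$, so the classical Schur-complement reasoning from the unsigned case transfers verbatim; the statement is genuinely about the signed setting only in that the common corank need not equal $1$.

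First I would take an arbitrary $z=(z_B,z_C)\in\ker L$ and split the condition $Lz=0$ into its two block equations,
\[
L_{BB}z_B + L_{BC}z_C = 0, \qquad L_{CB}z_B + L_{CC}z_C = 0.
\]
Since $L_{CC}$ is nonsingular by hypothesis, the second equation solves uniquely for the internal component as $z_C = -L_{CC}^{-1}L_{CB}z_B$. Substituting this into the first equation and using the definition \eqref{eq: signed Laplacian Schur complement} of the Schur complement gives $\widehat L z_B = 0$, so the projection $z\mapsto z_B$ maps $\ker L$ into $\ker\widehat L$.

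Conversely, given any $z_B\in\ker\widehat L$, I would set $z_C := -L_{CC}^{-1}L_{CB}z_B$ and check that the resulting $z=(z_B,z_C)$ satisfies both block equations, hence lies in $\ker L$. Because $z_C$ is uniquely determined by $z_B$, the (manifestly linear) projection $z\mapsto z_B$ is injective on $\ker L$, and the lifting just described shows it is surjective onto $\ker\widehat L$; it is therefore a linear isomorphism $\ker L\to\ker\widehat L$, yielding $\dim\ker L=\dim\ker\widehat L$, which is exactly \eqref{eq: L hatL corank}. An equivalent and more structural route is the factorization $L = P^\top\,{\rm diag}(\widehat L, L_{CC})\,P$ with the unit-triangular, hence invertible, matrix $P=\left[\begin{smallmatrix} I & 0 \\ L_{CC}^{-1}L_{CB} & I\end{smallmatrix}\right]$; symmetry of $L$ makes the two outer factors mutual transposes, so this is a congruence, invertibility of $P$ preserves corank, and $\corank{\rm diag}(\widehat L,L_{CC})=\corank\widehat L$ precisely because $L_{CC}$ is nonsingular.

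I do not anticipate a serious obstacle here: the computation is elementary and the only point worth flagging is conceptual rather than technical. Unlike the classical positive-weight connected case, where both coranks equal $1$ with kernel spanned by ${\bf 1}_n$, in the signed case sign cancellations can push the common corank above $1$, so the result cannot be shortcut by a connectivity argument and must rest on the sign-agnostic kernel bijection above — which is exactly why this lemma is the right tool for detecting the singularities studied later in the paper.
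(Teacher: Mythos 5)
Your proof is correct and follows essentially the same route as the paper: splitting $Lv=0$ into its block equations, solving the second block for $v_C$ via the invertibility of $L_{CC}$, and establishing a bijection between $\ker L$ and $\ker\widehat L$ through the projection $v\mapsto v_B$. The additional congruence factorization you sketch is a valid alternative the paper does not use, but it is not needed; your main argument already matches the paper's.
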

\begin{proof}
    Note that $v\in\ker L$ if and only if
    \begin{subequations}\label{eq: corank lem 1}
        \begin{align}
            L_{BB} v_B + L_{BC} v_C &=0\,,\\
            L_{CB} v_B + L_{CC} v_C &=0\,.
        \end{align}
    \end{subequations}
    Hence, if $v\in\ker L$, then $\widehat L v_B=[L_{BB}-L_{BC}L_{CC}^{-1}L_{CB}]v_B = L_{BB}v_B+L_{BC}v_C=0$ and $v_B\in\ker \widehat L$, where the first equality follows from~(\ref{eq: corank lem 1}b) and the second equality follows from~(\ref{eq: corank lem 1}a).

    Conversely, let $v_B\in\ker \widehat L$, i.e., $[L_{BB}-L_{BC}L_{CC}^{-1}L_{CB}]v_B=0$. Then $L_{BB}v_B+L_{BC}v_C=0$, where $v_C=L_{CC}^{-1}L_{CB}v_B$ is uniquely determined by (\ref{eq: corank lem 1}b). Hence, $v=(v_B,v_C)\in\ker L$.

    Thus, for any $v=(v_B,v_C)\in\ker L$, it follows that $v_B\in\ker\widehat L$, and, conversely, for any $v_B\in\ker\widehat L$ there exists unique $v_C$ such that $v=(v_B,v_C)\in\ker L$. Hence $\dim\ker L=\dim\ker\widehat L$.
\end{proof}

\subsection{Effective resistance and singular signed Laplacians}

The effective resistance $r_{ij}$ between two nodes $i$ and $j$ is determined by the voltage $v_i-v_j$ between the two nodes when a unitary current is injected at node $i$ and extracted at node $j$. If $L$ is the, unsigned or signed, Laplacian of the network, then $v_i$ and $v_j$ are obtained by solving $e_{ij}=Lv$ for $v$, where $e_{ij}\in\R^n$ is the vector with $1$ at position $i$, $-1$ at position $j$, and zero elsewhere. If $\corank L=1$, then the solution
\begin{equation*}
    v=L^\dagger e_{ij}
\end{equation*}
is uniquely determined on $\mathbf 1_n^\perp$, hence $v_i-v_j$ is well-defined. This leads directly to the classical characterization~\cite{klein1993resistance} of effective resistance for unsigned graphs
\[
r_{ij} = L^\dagger_{ii} +  L^\dagger_{jj} - 2  L^\dagger_{ij}\,,
\]
where $L$ is an unsigned Laplacian, as well as its signed graphs generalization~\cite{fontan2023pseudoinverses,zelazo2014definiteness,chen2020spectral}
\[
\rho_{ij} = L^\dagger_{ii} +  L^\dagger_{jj} - 2  L^\dagger_{ij}\,,
\]
where now $L$ is a signed Laplacian.

Consider now a signed graph with $n$ nodes, $m$ edges, incidence matrix $D$, weight matrix $W={\rm diag}(w_1,\ldots,w_m)$, Laplacian $L=DWD^\top$, and a single negative edge $l_-$ between node $i$ and node $j$ with weight $w_{l_-}=-k<0$. In particular, $w_l>0$ for all $l\neq l_-$. By removing the edge $l_-$ we obtain an unsigned graph with $n$ nodes, $m-1$ edges, incidence matrix $D_+$, weight matrix $W_+$, and unsigned Laplacian $L_{+} = D_+W_+D_+^\top$. The following result extends and provides a more elementary proof of~\cite[Theorem~III.3]{zelazo2014definiteness}.

\begin{theorem}\label{thm: laplacian singularity}
    Suppose that the graph defined by $L_+$ is connected and let $r_{ij}=(L_+^\dagger)_{ii} +  (L_+^\dagger)_{jj} - 2  (L_+^\dagger)_{ij}>0$.\\[0.1cm]
    \indent 1) If  $k<r_{ij}^{-1}$, then $L$ is positive semidefinite and $\corank L=1$. The restriction of $L$ to $\mathbf 1_n^\perp$ is positive definite.\\[0.1cm]
    \indent 2) If $k=r_{ij}^{-1}$, then $L$ is positive semidefinite, $\corank L=2$, and $L v = 0$, where $v = L_+^\dagger e_{ij}\in \mathbf 1_n^\perp$.\\[0.1cm]
    %The restriction of $L$ to\footnote{Given $n$-dimensional vectors $v_1,\ldots,v_p$, $\R\{v_1,\ldots,v_p\}=\{\alpha_1v_1+\cdots+\alpha_p v_p,\,\alpha_1,\ldots,\alpha_p\in\R\}$ denotes their real ``span''.} $\R\{{\mathbf 1}_n,\bar v\}^\perp$ is positive definite.
    \indent 3) If $k>r_{ik}^{-1}$, then $L$ is indefinite and $\corank L=1$.
    %The restriction of $L$ to $\R\{\bar v\}$ is negative definite, while the restriction of $L$ to $\R\{\mathbf 1_n,\bar v\}^\perp$ is positive definite.
\end{theorem}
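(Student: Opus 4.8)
The plan is to exploit the fact that, because there is only one negative edge, $L$ is a rank-one downdate of the unsigned Laplacian $L_+$. Writing the incidence column of the negative edge as $e_{ij}$, the contribution of that edge to $L=DWD^\top$ is $-k\,e_{ij}e_{ij}^\top$, so
\begin{equation*}
    L = L_+ - k\,e_{ij}e_{ij}^\top\,.
\end{equation*}
Since $L_+$ is the Laplacian of a connected graph, it is positive semidefinite with $\ker L_+ = \Span\{\mathbf 1_n\}$, and $e_{ij}\perp\mathbf 1_n$. Hence $L\mathbf 1_n = L_+\mathbf 1_n - k\,e_{ij}(e_{ij}^\top\mathbf 1_n)=0$, so $\mathbf 1_n\in\ker L$ for every $k$ and $\corank L\ge 1$ always. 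Moreover $L$ is symmetric with $L\mathbf 1_n=0$, so $L$ maps $\mathbf 1_n^\perp$ into itself, and the whole analysis reduces to the restriction $\tilde L$ of $L$ to $\mathbf 1_n^\perp$: one has $\corank L = 1 + \dim\ker\tilde L$, and the definiteness type of $L$ coincides with that of $\tilde L$ on $\mathbf 1_n^\perp$.

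On $\mathbf 1_n^\perp$ the restriction of $L_+$ is positive definite, so it admits a symmetric positive definite square root, and the restriction's inverse coincides with $L_+^\dagger$. I would then use the congruence $\tilde L = L_+^{1/2}\big(I - k\,ff^\top\big)L_+^{1/2}$ with $f := L_+^{-1/2}e_{ij}$ understood on $\mathbf 1_n^\perp$. By Sylvester's law of inertia, $\tilde L$ and $I-k\,ff^\top$ have the same inertia. The matrix $I-k\,ff^\top$ is the identity except for the single eigenvalue $1-k\norm{f}^2$ in the direction of $f$, and
\begin{equation*}
    \norm{f}^2 = e_{ij}^\top L_+^\dagger e_{ij} = (L_+^\dagger)_{ii}+(L_+^\dagger)_{jj}-2(L_+^\dagger)_{ij}=r_{ij}\,,
\end{equation*}
so the decisive quantity is the sign of $1-k\,r_{ij}$.

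The three cases then follow immediately. If $k<r_{ij}^{-1}$ then $1-k r_{ij}>0$, $\tilde L$ is positive definite, $L$ is positive semidefinite, and $\corank L=1$. If $k=r_{ij}^{-1}$ then $1-k r_{ij}=0$, $\tilde L$ is positive semidefinite with a one-dimensional kernel, so $L$ is positive semidefinite and $\corank L=2$; a direct computation $\tilde L\,(L_+^\dagger e_{ij}) = e_{ij}-k r_{ij}\,e_{ij}=0$ identifies $v=L_+^\dagger e_{ij}\in\mathbf 1_n^\perp$ as the extra kernel vector. If $k>r_{ij}^{-1}$ then $1-k r_{ij}<0$, $\tilde L$ acquires exactly one negative eigenvalue while the others stay positive, so $L$ is indefinite and $\corank L=1$.

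I expect the only delicate point to be the bookkeeping on $\mathbf 1_n^\perp$: verifying that the restriction of $L_+$ is genuinely invertible there (which is exactly where connectedness of $L_+$ is used), that its inverse equals $L_+^\dagger$, and that the square-root congruence is legitimate so that Sylvester's law applies. Everything else is a routine inertia count for a rank-one perturbation, and the effective resistance enters only through the scalar $\norm{f}^2=r_{ij}$.
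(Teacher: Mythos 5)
Your proposal is correct, and it reaches the conclusion by a genuinely different mechanism than the paper for the definiteness claims. Both arguments start from the same rank-one downdate $L=L_+-k\,e_{ij}e_{ij}^\top$ and both identify the decisive scalar $e_{ij}^\top L_+^\dagger e_{ij}=r_{ij}$, but from there the paper works directly with the kernel: it shows that any $w\in\mathbf 1_n^\perp$ with $Lw=0$ must satisfy $L_+w=k(w_i-w_j)e_{ij}$, hence must be a multiple of $v=L_+^\dagger e_{ij}$ (connectedness ruling out $w_i=w_j$), and then computes $Lv=(1-kr_{ij})e_{ij}$ to decide between $\corank L=1$ and $\corank L=2$; the positive semidefinite versus indefinite dichotomy is obtained separately by appealing to eigenvalue monotonicity of the Laplacian in the edge weights. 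You instead restrict to $\mathbf 1_n^\perp$, write the congruence $\tilde L=L_+^{1/2}\bigl(I-k\,ff^\top\bigr)L_+^{1/2}$ with $f=L_+^{-1/2}e_{ij}$ and $\norm{f}^2=r_{ij}$, and read off both the corank and the full inertia from the single eigenvalue $1-kr_{ij}$ via Sylvester's law. Your route is more self-contained (no external monotonicity theorem is needed, and the signature count in case 3 is exact rather than inferred), at the cost of invoking the square root of the restricted $L_+$; the paper's route uses only elementary manipulations and makes the structure of the critical kernel vector $v=L_+^\dagger e_{ij}$ more immediately visible, which is the object reused later in the paper. The delicate points you flag (invertibility of $L_+$ on $\mathbf 1_n^\perp$ from connectedness, identification of its inverse with $L_+^\dagger$, legitimacy of the congruence) are all routine and hold as you expect.
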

\begin{proof}
    Observe that $L=L_+ - k e_{ij}e_{ij}^\top$. Let $v=L_+^\dagger e_{ij}\in \mathbf 1_n^\perp$, $v_{i}-v_{j}=r_{ij}$. Consider $w\in\mathbf 1_n^\perp$ such that $Lw=0$. Then,
    \[
    L_+ w = k e_{ij}e_{ij}^\top w = k (w_i-w_j)e_{ij}.
    \]
    If $w_i-w_j=0$, then $L_+w=0$, which contradicts the hypothesis that the graph defined by $L_+$ is connected~\cite{fiedler1973algebraic}. If $w_i-w_j\neq 0$, then
    \[
    w=k (w_i-w_j)L_+^\dagger e_{ij} = k (w_i-w_j) v\,.
    \]
    Thus, either\footnote{Given $n$-dimensional vectors $v_1,\ldots,v_p$, we use $\R\{v_1,\ldots,v_p\}=\{\alpha_1v_1+\cdots+\alpha_p v_p\:|\:\alpha_1,\ldots,\alpha_p\in\R\}$ to denote their real ``span''.} $\ker L=\R\{\mathbf 1_n\}$ and $\corank L=1$, or $\ker L=\R\{\mathbf 1_n,v\}$ and $\corank L=2$. Observe that
    \[
        Lv = L_+ v - kr_{ij}^{-1} e_{ij} = e_{ij}(1-kr_{ij}^{-1})
    \]
    Thus, if $k=r_{ij}^{-1}$, then $Lv=0$ and $\corank L=2$, while if $k<r_{ij}^{-1}$ or $k>r_{ij}^{-1}$, then $Lv\neq 0$ and $\corank L=1$. Furthermore, by~\cite[Theorem~3.2]{mohar1991laplacian}, decreasing (increasing) the negative conductance weight can only increase (decrease) the eigenvalues of $L$, from which the result follows.
    %It follows that: $L$ is positive semidefinite for $k<r_{ij}^{-1}$ and in particular the restriction of $L$ to $\mathbf 1_n^\perp$ is positive definite; $L$ is positive semidefinite for $k=r_{ij}^{-1}$ and in particular the restriction of $L$ to $\mathbf 1_n^\perp$ is positive definite
\end{proof}

We call a signed Laplacian as in point 2) of Theorem~\ref{thm: laplacian singularity} a {\it singular signed Laplacian}. Singular signed Laplacians are associated with connected signed graphs, but their second smallest (Fiedler) eigenvalue (equal to the graph algebraic connectivity in the unsigned case) vanishes.
%\todelete{Using electrical network analogies (see also~\cite{zelazo2014definiteness}), it is easy to see that, as in the unsigned case, the sign of the components of the associated (Fiedler) eigenvector determine an effective cut of the graph.}

\section{Signed nonlinear networks and their terminal behavior}
\label{sec: signed nonlinear nets}

We build upon and generalize the nonlinear network modeling framework introduced in~\cite{van2024kron} to define nonlinear networks with positive and negative differential conductance edges and start characterizing their input-output behavior at given terminals. The example network in Figure~\ref{fig: net example} is used throughout the rest of paper to illustrate the introduced concepts and results. The $I/V$ characteristic $I=-S(kV,\beta)$ of its negative conductance is defined by the function
\begin{equation}\label{eq: sat cond def}
    S(ky,\beta) = \frac{\tanh(ky-\beta)+\tanh(\beta)}{1-\tanh(\beta)^2}\,,
\end{equation}
where $k$ is the gain of the negative conductance and $\beta\in\R$ is a parameter. Observe that $S(0,\beta)=0$ and $\frac{\partial S}{\partial y}(0,\beta)=k$ for all $\beta\in\R$.

\begin{figure}
    \centering
    \includegraphics[width=0.4\textwidth]{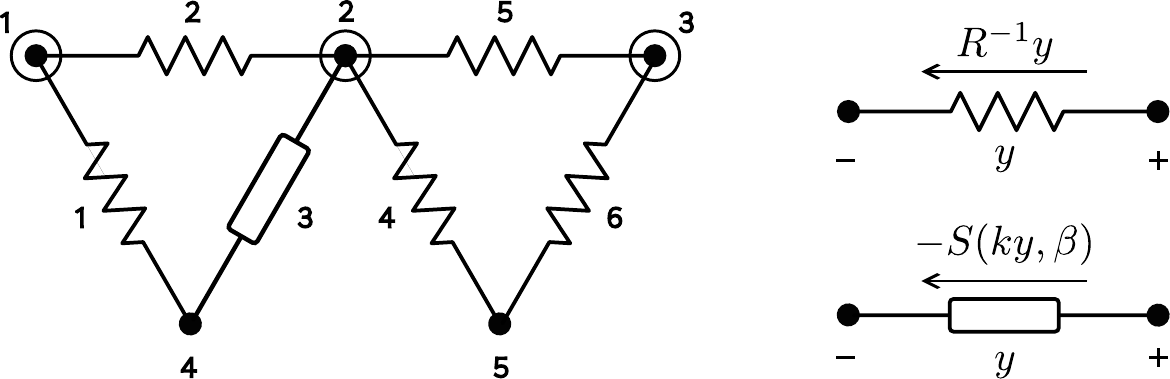}
    \caption{A signed nonlinear network with five nodes, three terminals (encircled vertices), five resistive edges with unitary resistance ($R=1$), and one negative conductance edge (boxed symbol).}
    \label{fig: net example}
\end{figure}

\subsection{Signed nonlinear networks}
\label{sec: nonlinear networks}

A {\it signed nonlinear network} is defined as a connected graph with $n$ vertices, or ``nodes'', $m$ edges, an $n\times m$ incidence matrix $D$, and vectors $z\in\R^n$ and $y=D^\top z\in\R^m$. The components of the vectors $z$ and $y$ are the potentials of the network nodes and the voltages across the network edges, respectively. The relation $y=D^\top z$ expresses the network Kirchhoff's voltage law.

Each edge $j\in\{1,\ldots,m\}$ is endowed with a nonlinear conductance $g_j(y_j)=\frac{\partial G_j}{\partial y_j}(y_j)$ that determines the current flowing through the edge. The scalar conductance potential $G_j(y_j)$ is assumed to be at least twice differentiable. The edge differential conductance is $\frac{\partial g_j}{\partial y_j}$.
%We extend the nonlinear network modeling framework introduced in~\cite{van2024kron} to allow for both positive and negative edges.
A network edge $j$ is positive if its differential conductance is positive, that is, if $g_j(y_j)$ is monotone increasing and $G_j(y_j)$ is convex. A network edge $j$ is negative if its differential conductance is negative, that is, if $g_j(y_j)$ is monotone decreasing and $-G_j(y_j)$ is convex. Let $\mathcal E_+$ be the set of positive edges and $\mathcal E_-$ the set of negative edges. The network potential $G(y)=\sum_{j=1}^m G_j(y_j)$ is the difference of two convex functions
\[
G(y) = \sum_{j\in\mathcal E_+} G_j(y_j) + \sum_{j\in\mathcal E_-} G_j(y_j)\,.
\]

For the network of Figure~\ref{fig: net example}, $\mathcal E_+=\{1,2,4,5,6\}$, $\mathcal E_-=\{3\}$, $G_j(y_j)=\frac{1}{2}y_j^2$ for $j\in\mathcal E_+$, and
\begin{equation}\label{eq: neg cond potential}
    G_3(y_3)=-\frac{\ln(\cosh(ky_3-\beta)+k\tanh(\beta)y_3)}{k(1-\tanh(\beta)^2)}\,.
\end{equation}
Note that $\frac{\partial G_3}{\partial y_3}(y_3) = S(ky_3,\beta)$.
%\BB{In the right-hand side, $y$ should read $y_3$. So this function is such that, for fixed $k$ (and $\beta$), $\frac{\partial G_3}{\partial y_3}(y_3) = S(ky_3,\beta)$? If so, it would be good to make this link with (7) explicit.}

Let $K:\mathbb R^n\to\R$ be defined as
\begin{equation}\label{eq: K unsigned}
	K(z) = G(D^\top z)\,.
\end{equation}
The gradient of $K$ with respect to the node potentials
\begin{equation}\label{eq: unsigned Kirchhoff current}
	\frac{\partial K}{\partial z}(z) = D\frac{\partial G}{\partial y} (D^\top z) = u\,,
\end{equation}
is the vector $u$ of nodal currents at each vertex, which express the network Kirchhoff's current law.

The Hessian of $K$ defines the ($z$-dependent) $n\times n$ {\it signed network Laplacian}
\begin{equation}\label{eq: L unsigned}
	L(z) = \frac{\partial^2 K}{\partial z^2}(z) = D \frac{\partial^2 G}{\partial y^2}(D^\top z) D^\top
\end{equation}
where $\frac{\partial^2 G}{\partial y^2}(D^\top z)$ is the $m\times m$ diagonal matrix of edge differential conductances. Note that $K(z)$, $\frac{\partial K}{\partial z}(z)$, and $L(z)$ are invariant with respect to potential shifts $z\mapsto z+c{\bf 1}_n$.

For the network in Figure~\ref{fig: net example}, it is easy to see that\footnote{Given a differentiable function $f:\R^q\to\R$, we sometimes denote, for $i,j\in\{1,\ldots,q\}$, $f_{x_i}=\frac{\partial f}{\partial x_i}$, $f_{x_ix_j}=\frac{\partial^2 f}{\partial x_i\partial x_j}$, and similarly for higher-order derivatives.}
\begin{equation}\label{eq: ex net laplacian general}
    L(z) = \left[\begin{array}{ccc:cc}
        2 & -1 & 0 & -1 & 0\\
        -1 & 3\!-\!S_y(*) & -1 & S_y(*) -1\\
        0 & -1 & 2 & 0 & -1\\
         \hdashline
        -1 & S_y(*) & 0 & 1\!-\!S_y(*) & 0\\
        0 & -1 & -1 & 0 & 2
    \end{array}\right]
\end{equation}
where $*=(k(z_2-z_4),\beta)$.
%The Laplacian blocks correspond to the node partition into terminal (boundary) and internal (central) nodes. \BB{The sentence about the partitioning is a bit distracting here, and I would be in favor of removing it. The partitioning becomes clear just above (15), which is a very natural place.}

\begin{remark}[Network Laplacian and local sensitivity analysis]
    Let $(u,z)$ satisfying $\frac{\partial K}{\partial z}(z)=u$ be a solution of the network behavior. The network Laplacian $L(z)$ characterizes the differential (linearized) sensitivity at $(u,z)$ of nodal currents to infinitesimal changes $\delta z$ in node potentials. The Moore-Penrose inverse $L^\dagger(z)$ of $L(z)$ characterizes the local sensitivity at $(u,z)$ of node potentials to infinitesimal changes $\delta u$ in nodal currents.
\end{remark}

Consider now the partition of the network nodes into $n_B$ boundary nodes, corresponding to the network terminals, and $n_C$ central nodes, corresponding to the network internal nodes, with $n_C+n_B=n$. Let $z_B\in\R^{n_B}$ be the vector of boundary nodes' potentials and $z_C\in\R^{n_C}$ the vector of central nodes' potentials. Modulo a reordering of the network vertices, $z=(z_B,z_C)$. Boundary nodes can be interconnected to external sources or loads. Hence, a nonzero nodal current vector
\begin{equation}\label{eq: terminal behavior}
    \frac{\partial K}{\partial z_B}(z)=u_B
\end{equation}
can flow at boundary nodes. Conversely,
\begin{equation}\label{eq: zero central current}
	\frac{\partial K}{\partial z_C}(z) =0\,,
\end{equation}
i.e., the nodal current at central nodes is zero. At each $z$, the partition into boundary and central nodes induces the block partition~\eqref{eq: signe Laplacian partition} of the network Laplacian, where
\begin{align*}
    L_{BB}(z)&=\frac{\partial^2K}{\partial z_B^2}(z)\,,\quad L_{BC}(z)=\frac{\partial^2K}{\partial z_B\partial z_C}(z)\,,\\
    L_{CB}(z)&=\frac{\partial^2K}{\partial z_C\partial z_B}(z)\,,\quad L_{CC}(z)=\frac{\partial^2K}{\partial z_C^2}(z)\,.
\end{align*}
For the network in Figure~\ref{fig: net example}, this partition is indicated by dashed lines in~\eqref{eq: ex net laplacian general}. Assume that $L_{CC}(z)$ is invertible, i.e.,
\begin{equation}\label{eq: solvable central}
	\det L_{CC}(z)=\det \frac{\partial^2 K}{\partial z_C^2}(z)\neq 0
\end{equation}
Suppose that $\frac{\partial K}{\partial z_C}(z_B^*,z_C^*) =0$ for some $z_B^*,z_C^*$. Then by the implicit function theorem~\eqref{eq: zero central current} can be solved for $z_C$ locally around $(z_B^*,z_C^*)$. Namely, there exists a locally defined twice differentiable function $\bar z_C(z_B)$, with $\bar z_C(z_B^*)=z_C^*$, such that $\frac{\partial K}{\partial z_C}(z_B,\bar z_C(z_B)) \equiv 0$.

\begin{remark}[Nonsingular central Laplacian in the signed and unsigned case]
    If the graph is connected and the network is unsigned, then $\frac{\partial^2 K}{\partial z_C^2}(z)>0$~\cite{van2010characterization} and~\eqref{eq: solvable central} is always satisfied. The same is not true in general in the signed case, as the example network in Figure~\ref{fig: net example} shows. For this network, $\frac{\partial^2 K}{\partial z_C^2}(z)$ becomes singular whenever $S_y(k(z_2-z_4),\beta)=1$ despite the underlying graph being connected.
\end{remark}

The function $\widehat K(z_B) = K(z_B,\bar z_C(z_B))$ defines a reduced model of the network terminal behavior. Indeed,
\begin{align}\label{eq: red terminal behavior}
    \frac{\partial \widehat K}{\partial z_B}(z_B)\!&=\!\frac{\partial K}{\partial z_B}(z_B,\bar z_C(z_B))\! +\! \frac{\partial K}{\partial z_C}(z_B,\bar z_C(z_B))\frac{\partial \bar z_C}{\partial z_B}(z_B)\nonumber\\
    &=\!\frac{\partial K}{\partial z_B}(z_B,\bar z_C(z_B))=u_B\,,
\end{align}
where we used that $\frac{\partial K}{\partial z_C}(z_B,\bar z_C(z_B))=0$ from~\eqref{eq: zero central current}.
%Hence, $\widehat{K}_B(z_B)$ fully characterizes the network behavior at its terminals.
Furthermore, for all fixed $z_B$, one can verify~\cite[Equation~(11)]{van2024kron} that $\frac{\partial^2 \widehat K}{\partial z_B^2}(z_B)$ is the Schur complement of the Laplacian $L(z_B,z_C(z_B))$ with respect to $\frac{\partial^2 K}{\partial z_C^2}(z_B,z_C)$, and hence a Laplacian itself~\cite[Theorem~3.1]{van2010characterization}. As detailed in~\cite{van2024kron}, the graph associated to the reduced Laplacian
\begin{equation}\label{eq reduced laplacian}
     \widehat L(z_B)=\frac{\partial^2 \widehat K}{\partial z_B^2}(z_B)
\end{equation}
defines, under certain conditions, a global Kron-reduced network model with only boundary nodes.

\begin{remark}[Non-computability of signed nonlinear Kron-reduced models]
    The function $\bar z_C(z_B)$ is in general hard to compute. In the unsigned case studied in~\cite{van2024kron} it is still possible to globally characterize the network terminal behavior even without an explicit knowledge of $\bar z_C(z_B)$. In the signed case studied here, the same global approach fails in general, which motivates the use of the singularity theory ideas and tools.
\end{remark}

\section{Singular networks and externally singular behaviors}

In this section we introduce and start to develop singularity theory ideas for nonlinear signed networks. 

\subsection{Singular networks}

The network input-output behavior~\eqref{eq: unsigned Kirchhoff current} can be rewritten in the relational form
\begin{equation}\label{eq: implicit network}
   F(z,u,\alpha) = \frac{\partial K}{\partial z}(z) - u = 0 
\end{equation}
where $\alpha$ is a vector of network parameters, like linear conductance values and nonlinear conductance gains. For the network of Figure~\ref{fig: net example}, we have $\alpha=(k,\beta)$. Given the partition $z=(z_B,z_C)$ and $u=(u_B,u_C)$, with $u_C=0$, into boundary and central nodes,~\eqref{eq: implicit network} reads
\begin{align}\label{eq: net kirchhoff current}
    0 & = F(z,u,\alpha) = \begin{pmatrix}
	F_B(z_B,z_C,u_B,\alpha)\\[0.1cm]
	F_C(z_B,z_C,0,\alpha)
    \end{pmatrix}
\end{align}
where
\[
F_i(z_B,z_C,u_i,\alpha) = \frac{\partial K}{\partial z_i}(z_B,z_C,\alpha)-u_i,\quad i=B,C\,.
\]
Let $L(z) = \frac{\partial F}{\partial z} (z,u,\alpha)=\frac{\partial^2 K}{\partial z^2}$ be the (Laplacian) Jacobian of~\eqref{eq: net kirchhoff current}. Assume that $L(z)$ loses rank at a network singular solution $(z^*,u^*)$ for $\alpha=\alpha^*$, namely,
\begin{subequations}
\begin{align}\label{eq: main singularity}
	F(z^*,u^*,\alpha^*) &= 0\\[0.1cm]
	\corank L(z^*) &=2.
\end{align}
\end{subequations}
It follows that there exists $0\neq v\in\mathbf 1_n^\perp$, such that $ L(z^*)\ v = 0$ and $\ker L(z^*)=\mathbb R\{ \mathbf 1_n, v\}$. The solution $(z^*,u^*)$ is called a {\it network singularity} and the network is said to be {\it singular} at $(z^*,u^*)$.

 \begin{remark}[Connections with Catastrophe Theory]
    A network singularity corresponds to a degenerate critical point $z^*$ of $K(z)$. The potential function $K$ is therefore locally flat along $v$ at $z^*$. Flatness of the potential function enables the emergence of network bifurcations and ultra-sensitive behaviors. Small perturbations to any of the network parameters can lead to changes in the convexity of $K$ at $z^*$ and critical points can appear or disappear. The study of the local structure of the critical points of real-valued functions is the main topic of Catastrophe Theory~\cite{thom2018structural}.
\end{remark}

In the following, we will assume, without loss of generality, that $(z^*,u^*,\alpha^*)=0$ and write $L$ instead of $L(0)$ for clarity.

\subsection{Externally singular behaviors}

Assume that the central node behavior is invertible at singularity, that is,
\begin{equation}\label{eq: sing net invC}
	\det \frac{\partial F_C}{\partial z_C}(0,0,0) \neq 0\,.
\end{equation}
As in Section~\ref{sec: nonlinear networks}, the implicit function theorem then implies that, at least locally around the singularity, there exists a differentiable function $\bar z_C(z_B,\alpha)$, defined in a neighborhood of $z_B=0$, such that $\bar z_C(0,0)=0$ and $F_C(z_B,\bar z_C(z_B,\alpha),0,\alpha)\equiv 0$. Replacing this solution in the first component of~\eqref{eq: net kirchhoff current}, we obtain the reduced network terminal behavior
\begin{equation}\label{eq: red net kirchhoff current}
	0= F_B(z_B,\bar z_C(z_B,\alpha),u_B,\alpha) = \widehat{F} (z_B,u_B,\alpha)\,.
\end{equation} 
By Lemma~\ref{lem: L hatL corank}, it follows that the (Laplacian) Jacobian $\widehat L = \frac{\partial \widehat F}{\partial z_B}(0,u_B,\alpha) = L_{BB}-L_{BC}L_{CC}^{-1}L_{CB}$ of~\eqref{eq: red net kirchhoff current} also satisfies $\corank\widehat L=2$ at the network singularity. In particular, there exists $\hat v\in\hat{\mathbf 1}^\perp$ such that $\widehat L\hat v=0$ and $\ker\widehat{L} = \mathbb R\{\hat{\mathbf 1},\hat v\}$, where we denoted $\hat{\mathbf 1}=\mathbf 1_{n_B}$. In other words, the network terminal behavior~\eqref{eq: red terminal behavior} is necessarily singular at a network singularity. A singular network satisfying~\eqref{eq: sing net invC} is said to exhibit an {\it externally singular behavior}.

\section{Lyapunov-Schmidt reduction of singular network terminal behavior}

In this section we compute the Lyapunov-Schmidt reduction~(see~\cite[Section~I.3]{Golubitsky1985} for an introduction to the finite-dimensional version used here) of the input-output behavior of a singular network at the given terminals using either the full network equations~\eqref{eq: net kirchhoff current} or the reduced network equations~\eqref{eq: red net kirchhoff current}. We show in particular that the two reductions coincide (up to a suitable notion of equivalence), which shows that if a Kron reduction of the signed network is known, then Kron and Lyapunov-Schmidt reductions are mutually consistent. In particular, the Lyapunov-Schmidt reduction of the network input-output behavior at its terminals is well-defined, i.e., independent of the used model.

\subsection{Projecting out global shift invariance}

The null eigenvalue/eigenvector pair $L\mathbf 1=0$ is spurious from a singularity theory perspective, in the sense that it reflects the network invariance to uniform shifts of currents and potentials, and not an actual singular behavior. In the following, we therefore restrict~\eqref{eq: net kirchhoff current}, as well as the potential and current vectors $z$ and $u$ to the subspace $\mathbf 1^\perp$ through the projection $\Pi = I - \frac{1}{n}\mathbf 1\mathbf 1^\top$. Because of shift invariance, it is immediate that the projected equations have exactly the same form as~\eqref{eq: net kirchhoff current}. Similarly, $\Pi L\Pi=L$. In the following we throughout restrict all expressions to ${\bf 1}^\perp$ and still use the same notation. In particular, we let
%With a small abuse of notation, in the following we assume
\begin{subequations}\label{eq: net restrict}
    \begin{align}
	&z,u\in\mathbf 1^\perp,\quad F(\cdot,\cdot,\alpha):\mathbf 1^\perp\times \mathbf 1^\perp \to \mathbf 1^\perp,\\  &F_i((\cdot,\cdot),\cdot,\alpha): \mathbf 1^\perp\times \mathbf 1^\perp \to \mathbf 1^\perp,\ i=B,C,\\
	&L  \mathbf 1^\perp \to \mathbf 1^\perp,\quad \corank L = 1, \quad \ker L = \mathbb R\{ v \} \,.
\end{align}
\end{subequations}
Similarly, since the reduced equation~\eqref{eq: red net kirchhoff current} is also shift invariant~\cite{van2024kron}, we let
\begin{subequations}\label{eq: red net restrict}
\begin{align}
	&z_B,u_C\in\mathbf 1^\perp,\quad \widehat F(\cdot,\cdot,\alpha):\hat {\mathbf 1}^\perp\times \hat {\mathbf 1}^\perp \to \hat {\mathbf 1}^\perp,\\
	&\widehat L \hat {\mathbf 1}^\perp \to \hat {\mathbf 1}^\perp,\quad \corank \widehat L = 1, \quad \ker \widehat L = \mathbb R\{ \hat v \} \,.
\end{align}
\end{subequations}

\subsection{Critical eigenvectors of full and Kron-reduced network singularities}

% In what follows, we will repeatedly use~\eqref{eq: vc as vb}. that if $Lv=0$ then $v_C = -L_{22}^{-1}L_{21} v_1$.

The following lemmas characterize the boundary component $v_B$ of the critical eigenvector $v$ obtained from the full network equations~\eqref{eq: net kirchhoff current} and links it to the critical eigenvector $\hat v$ obtained from the reduced equations~\eqref{eq: red net kirchhoff current}.

\begin{lemma}\label{lem: v1 subspace}
    Let $v=(v_B,v_C)\in\mathbf 1^\perp$. If, $0\neq v\in\ker L$ then $v_B\not\in\mathbb R\{\hat{\mathbf 1}\}$.
\end{lemma}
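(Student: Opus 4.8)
The plan is to argue by contradiction. Suppose that $v_B = c\,\hat{\mathbf 1}$ for some $c\in\R$, and show that this forces the \emph{entire} vector $v$ to be a scalar multiple of $\mathbf 1_n$, which is incompatible with $v\in\mathbf 1^\perp\setminus\{0\}$.

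First I would record the block consequences of global shift invariance $L\mathbf 1_n=0$. Writing $\mathbf 1_n=(\hat{\mathbf 1},\mathbf 1_{n_C})$ and using the partition~\eqref{eq: signe Laplacian partition}, the central (second) block row yields $L_{CB}\hat{\mathbf 1}+L_{CC}\mathbf 1_{n_C}=0$, i.e.\ $L_{CB}\hat{\mathbf 1}=-L_{CC}\mathbf 1_{n_C}$. This identity is the crux of the argument: it rewrites $L_{CB}\hat{\mathbf 1}$ purely in terms of $L_{CC}$, so that the invertibility hypothesis can be brought to bear.

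Next I would invoke the kernel condition. Since $v\in\ker L$, the central block of $Lv=0$ reads $L_{CB}v_B+L_{CC}v_C=0$ (the signed analogue of~(\ref{eq: corank lem 1}b)). Substituting $v_B=c\,\hat{\mathbf 1}$ together with the identity above gives $-c\,L_{CC}\mathbf 1_{n_C}+L_{CC}v_C=0$, that is $L_{CC}\bigl(v_C-c\,\mathbf 1_{n_C}\bigr)=0$. Because $L_{CC}$ is nonsingular by assumption~\eqref{eq: sing net invC}, this forces $v_C=c\,\mathbf 1_{n_C}$, and hence $v=(c\,\hat{\mathbf 1},c\,\mathbf 1_{n_C})=c\,\mathbf 1_n$.

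Finally, since $v\in\mathbf 1^\perp$ and $\mathbf 1_n^\top\mathbf 1_n=n\neq 0$, the relation $v=c\,\mathbf 1_n$ forces $c=0$, whence $v=0$, contradicting the standing hypothesis $v\neq 0$. (Note this single argument also subsumes the degenerate case $c=0$, where $v_B=0$ directly yields $v_C=0$ by invertibility of $L_{CC}$.) I do not anticipate a genuine obstacle here: the only nonroutine step is recognizing that one should feed the row-sum identity $L_{CB}\hat{\mathbf 1}=-L_{CC}\mathbf 1_{n_C}$ into the central kernel equation, after which invertibility of $L_{CC}$ closes the argument immediately. The remaining effort is merely bookkeeping of the boundary/central block structure.
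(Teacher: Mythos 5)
Your proof is correct and follows essentially the same route as the paper's: both arguments feed $v_B=c\,\hat{\mathbf 1}$ into the central block equation~(\ref{eq: corank lem 1}b), use the shift invariance $L\mathbf 1_n=0$ together with the invertibility of $L_{CC}$ to force $v$ to be a multiple of $\mathbf 1_n$, and then derive a contradiction from $v\in\mathbf 1^\perp\setminus\{0\}$. The only cosmetic difference is that the paper subtracts $a\mathbf 1_n$ from $v$ at the outset, whereas you substitute the row-sum identity $L_{CB}\hat{\mathbf 1}=-L_{CC}\mathbf 1_{n_C}$ directly.
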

\begin{proof}
	Assume $v_B =a \hat{\mathbf 1} $.  Then $v$ can be written as$$\mathbf 1^\perp \ni v = a \mathbf 1 + \begin{pmatrix}
	0 \\ \tilde v_C
	\end{pmatrix} =  \begin{pmatrix}
	0 \\ \tilde v_C
	\end{pmatrix},$$ for some $\tilde v_C$, and therefore $a=0$. Since $$L v = L\begin{bmatrix}
	0 \\ \tilde v_C
	\end{bmatrix} =0,$$ using~(\ref{eq: corank lem 1}b) we obtain $\tilde v_C = -L_{CC}^{-1}L_{CB} 0=0$ and therefore $v=0$, which contradicts $v\neq 0$.
\end{proof}
It follows from Lemma~\ref{lem: v1 subspace} that $v_B$ takes the form
\begin{equation}\label{eq: v1 tilde v1}
    v_B = a\hat{\mathbf 1}+\tilde v_B,\quad 0\neq \tilde v_B \in\hat{\mathbf 1}^\perp\,.
\end{equation}
Furthermore, from Lemma~\ref{lem: L hatL corank}, if $0\neq \hat v\in\ker\widehat L$, then there exists $0\neq a_B\in\R$ such that
\begin{equation}\label{eq: v1 and hat v}
    \tilde v_B= a_B\hat v\,.
\end{equation}
% \begin{lemma}\label{lem: v1 and hat v}
% 	There exist $a_B\in\R$, $a_B\neq0$, such that $\tilde v_B=a_B\hat v$.
% \end{lemma}
% \begin{proof}
% 	Using~\eqref{eq: v1 tilde v1} and again~(\ref{eq: corank lem 1}b), we have $$0\!=\!L\!\begin{bmatrix}
% 		v_B\\v_C
% 	\end{bmatrix}\!=\!L \!\begin{bmatrix}
% 	\tilde v_B\!+\!a\hat{\mathbf 1} \\-L_{CC}L_{BC} (\tilde v_B \!+\! a\hat{\mathbf 1}) 
% 	\end{bmatrix} \!=\! \begin{bmatrix}
% 	\tilde v_B\\-L_{CC}L_{CB} \tilde v_B 
% 	\end{bmatrix},$$ which implies that  $\tilde v_B\in\ker\widehat{L}$. Recalling that $\tilde v_B \in\hat{\mathbf 1}^\perp$ and invoking~(\ref{eq: red net restrict}b) the result follows.
% \end{proof}

\subsection{Lyapunov-Schmidt reduction of the full network equations}

%The Jacobian $L=\frac{\partial F}{\partial z}(0,0,0)$ of~\eqref{eq: implicit network} is singular (including on $\mathbf 1_n^\perp$) at the singular solution $(z,u)=(0,0)$. It is therefore not possible to apply the implicit function theorem to study how such a solution changes as a function of the network parameters $\alpha$.
The fundamental idea of the Lyapunov-Schmidt reduction is to solve~\eqref{eq: implicit network}, restricted to $\mathbf 1_n^\perp$ as in~(\ref{eq: net restrict}a), on a complement of $\ker L$ where $L$ is full rank and the implicit function theorem applies. Using the resulting implicit solution, one obtains a scalar equation on $\ker L$ that can be analyzed through singularity theory methods. Let
\[
E^c = \begin{bmatrix}
	v_B v_B^\top & v_B v_C^\top\\[0.1cm]
	v_C v_B^\top & v_C v_C^\top
\end{bmatrix}\quad\mbox{and}\quad E = I - E^c\,.
\]
Observe that the $E^c: \mathbf 1_n^\perp \to \mathbf 1_n^\perp$ is the projection onto $\ker L$, $E: \mathbf 1_n^\perp \to \mathbf 1_n^\perp$ is the projection onto $\im L$, and $\mathbf 1_n^\perp = \im E\oplus\im E^c = \im L\oplus\ker L$. Consider the $n-2$, because of the restriction~(\ref{eq: net restrict}a), independent equations
\begin{equation}\label{eq: full lyap E}
	0 = E F(z,u,\alpha)
\end{equation}
obtained by projecting~\eqref{eq: net kirchhoff current} on $\im L$. Because the linearization $EL=L$ of $E F(z,u,\alpha)$ is invertible on $\im L$, we can split $z=x v+w$, where $x v\in\ker L$, $x\in\R$, and $w=Ez\in\im L$, to solve~\eqref{eq: full lyap E} for $w$ as a function of $x$, $u=(u_B,0)$, and $\alpha$. Let the resulting implicit function be $W(x,u_B,\alpha)$, which satisfies $E F(xv+W(x,u_B,\alpha),u,\alpha)\equiv 0$.
	
On solutions of the network behavior, the implicit function $W(x,u_B,\alpha)$ must be consistent with the network internal behavior $z_C=\bar z_C(z_B)$. In particular, if $F(z,u,\alpha)=0$ then
\begin{align*}
\begin{bmatrix}
	z_B\\z_C
\end{bmatrix} &=
\begin{bmatrix}
	xv_B+W_B(x,u_B,\alpha) \\ xv_C+W_C(x,u_B,\alpha)
\end{bmatrix}  \\
&= \begin{bmatrix}
	xv_B+W_B(x,u_B,\alpha)  \\ \bar z_C(xv_B+W_B(x,u_B,\alpha))
\end{bmatrix},
\end{align*}
which uncovers the following {\it consistency condition} between Lyapunov-Schmidt and Kron reductions
\begin{equation}\label{eq: consistency}
	xv_C+W_C(x,u_B,\alpha) = \bar z_C(xv_B+W_B(x,u_B,\alpha)).
\end{equation}
Inserting~\eqref{eq: consistency} into~\eqref{eq: full lyap E} and using~\eqref{eq: red net kirchhoff current}, we obtain
\[
E\begin{bmatrix}
	\widehat{F}(xv_B\!+\!W_B(x,u_B,\alpha),u_B,\alpha)\\0
\end{bmatrix}\!=\!
\begin{bmatrix}
	\widehat{F}(*)\!-\!v_Bv_B^\top 	\widehat{F}(*)\\
	-v_Cv_C^\top 	\widehat{F}(*)
\end{bmatrix}
\]
where the asterisks in the right-hand side denote the same argument as the left-hand side. Solving the first line for $\widehat{F}(*)$ and replacing in the second leads to $-v_Cv_C^\top 	\widehat{F}(*) =\|v_B\| v_Cv_C^\top \widehat{F}(*) $, which implies $v_Cv_C^\top \widehat{F}(*)=0$. The first line then provides the defining condition
\begin{align}\label{eq: W1 defining}
	0&=(I-v_Bv_B^\top) \widehat{F}(xv_B+W_B(x,u_B,\alpha),u_B,\alpha)\nonumber\\
    &=(I- a_B^2\hat v\hat v^\top) \widehat{F}(a_Bx\hat v+W_B(x,u_B,\alpha),u_B,\alpha)
\end{align}
of the boundary component $W_B(x,u,\alpha)$ of $W$. Note that for the second equality we used~\eqref{eq: v1 and hat v} and~(\ref{eq: red net restrict}a). We can plug this solution into the complementary rank~1 system of equations $E^cF(z,u,\alpha)=0$ on $\ker L$ to obtain
\[
	E^cF(z,u,\alpha) = \begin{pmatrix}
		v_Bv_B^T \widehat{F}(xv_B+W_B(x,u_B,\alpha),u_B,\alpha)\\
		v_Cv_C^T \widehat{F}(xv_B+W_B(x,u_B,\alpha),u_B,\alpha)
	\end{pmatrix}.
\]
Finally, the Lyapunov-Schmidt reduction of~\eqref{eq: net kirchhoff current} is
\begin{align}\label{eq: full lyap schmidt}
	f(x,u_B,\alpha) &= v^\top E^cF(z,u,\alpha) \nonumber \\
	&=v_B^\top \widehat{F}\bigl(xv_B+W_B(x,u_B,\alpha),u_B,\alpha\bigr)\nonumber\\
    &=a_B\hat v^\top \widehat{F}\bigl(a_Bx\hat v+W_B(x,u_B,\alpha),u_B,\alpha\bigr)\,,
\end{align}
where $W_B(x,u_B,\alpha)$ satisfies~\eqref{eq: W1 defining} and we used again~\eqref{eq: v1 and hat v} and~(\ref{eq: red net restrict}a). 

\subsection{Lyapunov-Schmidt reduction of the reduced network equations}

Let $\widehat E^c=\hat v\hat v^\top:\hat{\mathbf 1}^\perp\to\hat{\mathbf 1}^\perp$ and $\widehat E=I-\widehat E^c\hat{\mathbf 1}^\perp\to\hat{\mathbf 1}^\perp$ be the projection onto $\ker \widehat L$ and $\im \widehat L$, respectively. Consider the splitting $z_B=x\hat v+\hat w$, where $x\hat v\in\ker\widehat{L}$, $x\in\R$, and $\hat w\in\im\widehat{L}$. Invoking the implicit function theorem, the $n_B-2$ independent equations $\widehat E\widehat{F}(z_B,u_B,\alpha)=0$ can be solved for $\hat w$ in $\im \widehat{L}$, which leads to the implicit function $\widehat W(x,u_B,\alpha)$ satisfying
\begin{equation}\label{eq: hat W defining}
	(I-\hat v\hat v^\top) \widehat{F}\bigl(x\hat v+\widehat W(x,u_B,\alpha),u_B,\alpha\bigr)=0
\end{equation}
The remaining rank 1 system of equations $\widehat{E}^c\widehat{F}(z_B,u_B,\alpha)=0$ on $\ker \widehat L$, leads to the Lyapunov-Schmidt reduction of~\eqref{eq: red net kirchhoff current}, which reads
\begin{align}\label{eq: red lyap schmidt}
\hat f(x,u_B,\alpha) &= \hat v^\top \widehat E^c\widehat F(z_B,u_B,\alpha) \nonumber \\
&=\hat v^\top \widehat{F}\bigl(x\hat v+\widehat W(x,u_B,\alpha),u_B,\alpha\bigr)\,,
\end{align}
and where $\widehat W(x,u_B,\alpha)$ satisfies~\eqref{eq: hat W defining}.

\subsection{Consistency of the Lyapunov-Schmidt reduction of full and reduced network behaviors}

\begin{theorem}\label{thm: main equivalence}
%The Lyapunov-Schmidt reduction $g(x,u_1,\alpha)$ of the full network behavior and the Lyapunov-Schmidt reduction $\hat g(x,u_1,\alpha)$ of the Kron-reduced network behavior satisfy
Let $a_B$ be defined in~\eqref{eq : v1 and hat v}. Then
    \begin{align*}%\label{eq: main equivalence}
        \hat f(x,u_1,\alpha) &= a_B^{-1}f(x,u_B,\alpha)\,.
    \end{align*}
    %\BB{Can we not simply write $u_B$ for $u_1$ here?}
\end{theorem}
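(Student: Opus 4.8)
The plan is to read off the two reductions from the explicit expressions already derived and then reduce the claim to an identification of the implicit solutions. By \eqref{eq: full lyap schmidt} and \eqref{eq: red lyap schmidt}, the full and reduced reductions are $f(x,u_B,\alpha)=a_B\,\hat v^\top\widehat F\bigl(a_Bx\hat v+W_B(x,u_B,\alpha),u_B,\alpha\bigr)$ and $\hat f(x,u_B,\alpha)=\hat v^\top\widehat F\bigl(x\hat v+\widehat W(x,u_B,\alpha),u_B,\alpha\bigr)$, so both are built from the \emph{same} Kron-reduced terminal map $\widehat F$. The factor $a_B$ relating them is essentially free: since $v_B=a\hat{\mathbf 1}+a_B\hat v$ by \eqref{eq: v1 tilde v1} and \eqref{eq: v1 and hat v}, and $\widehat F\in\hat{\mathbf 1}^\perp$, contraction with $v_B^\top$ annihilates the $\hat{\mathbf 1}$-component and leaves $v_B^\top\widehat F=a_B\hat v^\top\widehat F$. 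Hence it only remains to show that the arguments fed to $\widehat F$ agree, i.e.\ that $a_Bx\hat v+W_B$ and $x\hat v+\widehat W$ coincide modulo the shift direction $\hat{\mathbf 1}$.

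To establish this, I would first invoke the shift-invariance of $\widehat F$ inherited from the reduced network: it allows me to discard any $\hat{\mathbf 1}$-component of the argument, which is precisely what turns the full argument $xv_B+W_B$ into $a_Bx\hat v+W_B$ in \eqref{eq: W1 defining} and places both arguments in $\hat{\mathbf 1}^\perp$. I would then show that the defining conditions \eqref{eq: W1 defining} and \eqref{eq: hat W defining} cut out the same point of $\hat{\mathbf 1}^\perp$: both force the component of $\widehat F$ along $\im\widehat L=\{\hat{\mathbf 1},\hat v\}^\perp$ to vanish, leaving $\widehat F$ in the kernel line $\mathbb R\{\hat v\}$. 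Because $\widehat L$ restricted to $\im\widehat L$ is invertible, the implicit function theorem applies to either condition and yields a \emph{unique} solution; matching the kernel coordinates through $\tilde v_B=a_B\hat v$ then identifies $W_B$ with $\widehat W$ and makes the two arguments equal modulo $\hat{\mathbf 1}$.

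Substituting this common argument back into the two expressions gives $f=a_B\hat f$, i.e.\ $\hat f=a_B^{-1}f$, as claimed. The main obstacle I anticipate is the middle step: reconciling the non-orthogonal operator $I-a_B^2\hat v\hat v^\top$ appearing in \eqref{eq: W1 defining} with the orthogonal projector $I-\hat v\hat v^\top$ in \eqref{eq: hat W defining}, and aligning the two kernel parametrizations so that the uniqueness of the implicit function theorem can be invoked directly on the two defining equations. This is exactly where the relation $\tilde v_B=a_B\hat v$ and the shift-invariance of $\widehat F$ must be tracked carefully, since together they are responsible for the scalar $a_B$ surviving in the final identity.
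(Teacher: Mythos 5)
Your argument follows the paper's proof essentially verbatim: both identify the scalar $a_B$ via $v_B^\top\widehat F=a_B\hat v^\top\widehat F$ (i.e., $\tilde v_B=a_B\hat v$) and then identify $W_B$ with $\widehat W$ up to the kernel rescaling $\tilde x=a_Bx$ by comparing the defining conditions \eqref{eq: W1 defining} and \eqref{eq: hat W defining}. The only difference is that you make explicit the implicit-function-theorem uniqueness step that the paper compresses into ``comparing \ldots it follows immediately''; the reconciliation of $I-a_B^2\hat v\hat v^\top$ with $I-\hat v\hat v^\top$ that you flag as the delicate point is likewise left implicit in the paper.
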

\begin{proof}
    Using the change of variables $ \tilde x = a_Bx$, \eqref{eq: W1 defining} and~\eqref{eq: full lyap schmidt} can be rewritten as
\begin{align*}
    0&=(I-a_B^2\hat v\hat v^\top)\widehat F(\tilde x\hat v+W_B(\tilde x/a_B,u_B,\alpha),u_B,\alpha)
\end{align*}
and
\begin{align*}
    g(\tilde x,u_B,\alpha)&=a_B\hat v^\top\widehat F(\tilde x\hat v+W_B(\tilde x/a_B,u_B,\alpha),u_B,\alpha)\,.
\end{align*}
Comparing with~\eqref{eq: hat W defining} and~\eqref{eq: red lyap schmidt}, it follows immediately that
\begin{align*}\label{eq: main equivalence}
    \widehat W(x,u_B,\alpha)&=W_1(a_B^{-1}x,u_B,\alpha)\\
    \hat f(x,u_B,\alpha) &= a_B^{-1}f(x,u_B,\alpha) \qedhere
\end{align*}
\end{proof}

Theorem~\ref{thm: main equivalence} implies  that $f(x,u_B,\alpha)$ and $\hat f(x,u_B,\alpha)$ define {\it strongly equivalent} bifurcation problems~\cite{Golubitsky1985}. In other words, the nonlinear terminal behavior of a network close to a singularity can equivalently be studied through the Lyapunov-Schmidt reduction of its full~\eqref{eq: net kirchhoff current} or reduced~\eqref{eq: red net kirchhoff current} models.

\begin{remark}[Application of Theorem~\ref{thm: main equivalence}]
The function $\bar z_C(z_B)$ and therefore the reduced network equations $\widehat F(z_B,u_B,\alpha)=0$ are hard to compute in general. Theorem~\ref{thm: main equivalence} ensures that the reduced boundary behavior, its singularities, and bifurcations can be fully characterized by the standard application of the Lyapunov-Schmidt reduction to the full network equations.
\end{remark}

\section{Ultrasensitive behaviors and network bifurcations}

\subsection{Ultrasensitive behaviors}

Unpacking the definition of the function $\widehat F$ in~\eqref{eq: full lyap schmidt}, one obtains
\begin{align*}
    f(x,u_B,\alpha) &= \hat v^\top\frac{\partial K_B}{\partial z_B}(z_B,\bar z_C(z_B)) + \hat v^\top u_B\\
    &=\tilde f(x,\alpha) + \hat v^\top u_B\,,
\end{align*}
where $\tilde f(x,\alpha)=\hat v^\top\frac{\partial K_B}{\partial z_B}(z_B,\bar z_C(z_B))$. As shown in~\cite[Section I.3]{Golubitsky1985}, the network singularity condition implies that
\[
\frac{\partial f}{\partial x}(0,0,0)=\frac{\partial \tilde f}{\partial x}(0,0) = 0.
\]
%\BB{Should this be $f$ instead of $g$ (also for the version with tilde)?}
The linearization of the equation $0=\tilde f(x,\alpha) + \hat v^\top u_B$ at $(x,u_B,\alpha)=(0,0,0)$ then reads
\[
0=0\cdot x + \hat v^\top u_B\,,
\]
which reveals that, close to a singularity, the linearized input-to-state sensitivity blows up. The behavior is {\it ultra-sensitive}. Furthermore, this ultra-sensitive behavior is {\it selective}, in the sense that all input vectors $u_B$ that are orthogonal to the critical eigenvector $\hat v$ are filtered out by the network behavior.

\subsection{Network bifurcations}

Consider the parameterization
\begin{equation}\label{eq: bif parameterization}
    (u,\alpha)=(u(\lambda),\alpha(\lambda)),\quad \lambda\in\R
\end{equation}
of the network input and parameters, with $(u(0),\alpha(0))=(0,0)$. As the {\it bifurcation parameter} $\lambda$ crosses zero, the network crosses its singularity and undergoes a {\it bifurcation}. With a little abuse of notation, let
\[
f(x,\lambda) = f(x,u_B(\lambda),\alpha(\lambda))
\]
be the network Lyapunov-Schmidt reduction~\eqref{eq: full lyap schmidt} at $(z,u,\alpha)=(0,0,0)$. The set
\begin{equation}\label{eq: bifurcation diagram}
    \mathcal B=\{(x,\lambda)\,:\,f(x,\lambda)=0\}
\end{equation}
is called the {\it bifurcation diagram} of $f$. Since the solutions to $f(x,u,\alpha)=0$ are in one-to-one correspondence with the solutions of~\eqref{eq: net kirchhoff current} through the smooth mapping $z=xv+W(x,u_B,\alpha)$, $\mathcal B$ provides a qualitative description of how network solutions change close to a network singularity as inputs and parameters change through $\lambda$. In particular, the {\it recognition problem} formulation~\cite[Chapter~II]{Golubitsky1985} allows one to characterize $\mathcal B$ via the Taylor expansion of $f$ obtained through implicit differentiation methods~\cite[Section~I.3.e]{Golubitsky1985}.

\subsection{Network bifurcations with a single negative conductance}

Consider a signed nonlinear network with $n$ nodes, partitioned as above into $n_B$ boundary nodes and $n_C$ central nodes, $m$ edges, and with a single negative conductance
\begin{equation}
  I_{l_-}=-k\frac{\partial G_{l_-}}{\partial y_{l_-}}(y_{l_-}),\quad \frac{\partial G_{l_-}}{\partial y_{l_-}}(0)=0  
\end{equation}
on edge $l_-$ between nodes $i$ and $j$, where $G_{l_-}(y_{l_-})$ is a convex function and $k$ is the tunable gain of the negative conductance. Without loss of generality, let $\frac{\partial^2 G_{l_-}}{\partial y_{l_-}^2}(0)=1$. Assume, for simplicity and conciseness of exposition, that all the positive conductances are linear and fixed, i.e., $G_l(y_l)=\frac{1}{2}R_l^{-1}y_l^2$ for all $l\in\{1,\ldots,m\}\setminus l_-$. Hence, the network obtained by removing the negative conductance edge is a linear and fixed resistive network. 

The resulting signed nonlinear network $\mathcal N$ is parameterized by a single parameter $\alpha=k$. Let $L(z)$ be the network Laplacian and $L_+$ be the Laplacian of the linear resistive network obtained by removing the negative conductance edge. Also, let $g_-(y_{l_-})=\frac{\partial G_{l_-}}{\partial y_{l_-}}(y_{l_-})$. The proof of the following theorem is omitted for space constraints but follows directly from Theorem~\ref{thm: laplacian singularity} and the elementary algebraic manipulations required to apply the implicit differentiation formulae in~\cite[Section~I.3.e]{Golubitsky1985}.

\begin{theorem}\label{thm: single neg res}
    Assume that the graph associated with $L_+$ is connected and let $r_{ij}=v_i-v_j$ with $L_+v=e_{ij}$. Then, $\mathcal N$ is singular at $(z,u,k)=(0,0,r_{ij}^{-1})$ and $\ker L(0)=\R\{\mathbf 1_n,v\}$. Furthermore, if $\frac{\partial^2 K }{\partial z_C^2}$ is invertible, then the Lyapunov-Schmidt reduction~\eqref{eq: full lyap schmidt}, or equivalently~\eqref{eq: red lyap schmidt}, satisfies, at $(x,u_B,k)=(0,0,r_{ij}^{-1})$,
   \begin{align*}
       &f=f_x=f_k=0,\quad f_{xx}=-g_-''(0)r_{ij}^2,\\
       &f_{kx}=-r_{ij}^2,\quad f_{xxx}=-g_-'''(0)r_{ij}^3\,.
   \end{align*}
\end{theorem}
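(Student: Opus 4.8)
The plan is to split the argument into two stages: the singularity/kernel claims (the first two assertions) follow almost directly from Theorem~\ref{thm: laplacian singularity}, while the Taylor data of the Lyapunov--Schmidt reduction is obtained by substituting the network structure into the implicit differentiation formulae of~\cite[Section~I.3.e]{Golubitsky1985}.

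First I would make the single negative edge explicit. Because every positive edge is linear and fixed, the nodal current map is $\frac{\partial K}{\partial z}(z,k)=L_+ z - k\,g_-(e_{ij}^\top z)\,e_{ij}$, so that $F(z,u,k)=L_+ z - k g_-(e_{ij}^\top z) e_{ij} - u$ and the network Laplacian is $L(z)=L_+ - k g_-'(e_{ij}^\top z)\,e_{ij}e_{ij}^\top$. At the origin, $g_-(0)=0$ and $g_-'(0)=1$ give $F(0,0,k)=0$ for all $k$ and $L(0)=L_+ - k\,e_{ij}e_{ij}^\top$. This is precisely the matrix treated in Theorem~\ref{thm: laplacian singularity}; evaluating its point~2) at $k=r_{ij}^{-1}$ yields $\corank L(0)=2$ with $\ker L(0)=\R\{\mathbf 1_n,v\}$ and $v=L_+^\dagger e_{ij}$, which are the first two claims. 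I record $e_{ij}^\top v=v_i-v_j=r_{ij}$ for repeated use.

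For the terminal behavior I would work on $\mathbf 1^\perp$ with the normalization-independent reduction $f=v^\top F(xv+W,u,k)$, where $W\in\im L$ solves $EF(xv+W,u,k)=0$ and $E$ is the orthogonal projection onto $\im L=(\ker L)^\perp$. The only surviving derivative tensors of $F$ are concentrated on the negative edge: at the origin $\Phi_{zz}(a,b)=-k g_-''(0)(e_{ij}^\top a)(e_{ij}^\top b)e_{ij}$, $\Phi_{zzz}(a,b,c)=-k g_-'''(0)(e_{ij}^\top a)(e_{ij}^\top b)(e_{ij}^\top c)e_{ij}$, $\Phi_k=-g_-(e_{ij}^\top z)e_{ij}$ (hence $\Phi_k=0$ at the origin), and $\Phi_{zk}=-g_-'(e_{ij}^\top z)e_{ij}e_{ij}^\top$. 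The implicit differentiation formulae then yield the first five quantities immediately: $f=f_x=0$ because $Lv=0$; $f_k=v^\top\Phi_k=0$ because $g_-(0)=0$ (which also forces the implicit solution to be $k$-independent to first order, $W_k=0$); $f_{xx}=v^\top\Phi_{zz}(v,v)=-k g_-''(0)r_{ij}^3=-g_-''(0)r_{ij}^2$; and, since $W_k=0$, $f_{kx}=v^\top\Phi_{zk}v=-g_-'(0)r_{ij}^2=-r_{ij}^2$, using $k=r_{ij}^{-1}$ and $g_-'(0)=1$ throughout.

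The delicate point, and the main obstacle, is $f_{xxx}$. The implicit differentiation formula reads $f_{xxx}=v^\top\Phi_{zzz}(v,v,v)-3\,v^\top\Phi_{zz}\bigl(v,\,L^{-1}E\,\Phi_{zz}(v,v)\bigr)$, where $L^{-1}$ denotes the inverse of $L$ restricted to $\im L$, which is well defined once the restriction to $\mathbf 1^\perp$ makes $\corank L=1$ there; the hypothesis that $\frac{\partial^2 K}{\partial z_C^2}$ is invertible is what guarantees that this full reduction and its Kron-reduced counterpart coexist. The first term gives $v^\top\Phi_{zzz}(v,v,v)=-k g_-'''(0)r_{ij}^4=-g_-'''(0)r_{ij}^3$, matching the claim, so everything reduces to controlling the correction. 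Here I would observe that $\Phi_{zz}(v,v)=-k g_-''(0)r_{ij}^2 e_{ij}$ is proportional to $g_-''(0)$, whence the correction is proportional to $g_-''(0)^2$ and vanishes identically when $g_-''(0)=0$. This is exactly the regime in which the cubic coefficient is the modal parameter of the singularity (the pitchfork case), whereas for $g_-''(0)\neq 0$ the quadratic term $f_{xx}$ already organizes the germ (a transcritical-type normal form, since $f_k=0$ but $f_{kx}\neq 0$) and $f_{xxx}$ is not invariant. A direct evaluation of $e_{ij}^\top L^{-1}E e_{ij}$ (which is generically nonzero, as a four-node path already shows) confirms that the correction cannot be dropped unconditionally, so I would present the last identity as exact in the relevant regime $g_-''(0)=0$ and otherwise understood modulo this $O(g_-''(0)^2)$ term, rather than claiming the correction vanishes in general.
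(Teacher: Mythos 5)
Your proof is correct and takes exactly the route the paper indicates (the paper omits the proof, describing it as a direct consequence of Theorem~\ref{thm: laplacian singularity} plus the implicit differentiation formulae of~\cite[Section~I.3.e]{Golubitsky1985}): the singularity and kernel claims follow from point 2) of Theorem~\ref{thm: laplacian singularity} applied to $L(0)=L_+-k\,e_{ij}e_{ij}^\top$, and the Taylor coefficients from the standard Lyapunov--Schmidt formulas with all nonlinearity concentrated on the rank-one tensor supported by $e_{ij}$. Your caveat on $f_{xxx}$ is also well taken: the full formula carries the correction term $-3\,v^\top d^2F\bigl(v,L^{-1}E\,d^2F(v,v)\bigr)=-3\,g_-''(0)^2\,r_{ij}^2\,(Ee_{ij})^\top L^{-1}(Ee_{ij})$, which is generically nonzero when $g_-''(0)\neq 0$ (the quadratic form is positive definite on $\im L$ at the singularity by Theorem~\ref{thm: laplacian singularity}) and vanishes exactly when $g_-''(0)=0$. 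Since Corollary~\ref{cor: single neg res} only invokes $f_{xxx}$ in the regime $g_-''(0)=0$, the downstream conclusions are unaffected, but the identity $f_{xxx}=-g_-'''(0)r_{ij}^3$ should indeed be read under that qualification, as you propose.
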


\begin{corollary}\label{cor: single neg res}
    Let $k=\lambda$ be the bifurcation parameter of the nonlinear behavior of the signed nonlinear network $\mathcal N$. If $g_-''(0)\neq0$,
    %\BB{Does $g$ need a subscript $g_-$? The same question holds for all $g$'s in the statemen tof this corollary below.}
    then the input-output terminal behavior of $\mathcal N$ undergoes at $(x,u_B,k)=(0,0,r_{ij}^{-1})$ a {\it transcritical} bifurcation. If $g_-''(0)=0$ and $g_-'''(0)\neq0$, then the bifurcation at $(x,u_B,k)=(0,0,r_{ij}^{-1})$ is a {\it pitchfork}. Furthermore, the pitchfork is supercritical if $g_-'''(0)<0$ and subcritical if $g_-'''(0)>0$. Finally, if $g_-''(0)=g_-'''(0)=0$, then $\mathcal N$ undergoes at $(x,u_B,k)=(0,0,r_{ij}^{-1})$ a bifurcation of codimension$>2$.
\end{corollary}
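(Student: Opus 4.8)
The plan is to read the Corollary as an instance of the elementary \emph{recognition problem} of singularity theory \cite[Chapter~II]{Golubitsky1985}, applied to the germ of the reduced bifurcation function $f(x,\lambda)$ whose low-order Taylor data at the singularity are furnished by Theorem~\ref{thm: single neg res}. I would set $u_B=0$ and use the shifted bifurcation parameter $\lambda=k-r_{ij}^{-1}$, so that the singularity sits at $(x,\lambda)=(0,0)$ and $\lambda$-derivatives coincide with $k$-derivatives. Theorem~\ref{thm: single neg res} then supplies $f=f_x=f_\lambda=0$, $f_{xx}=-g_-''(0)r_{ij}^2$, $f_{x\lambda}=-r_{ij}^2\neq0$, and $f_{xxx}=-g_-'''(0)r_{ij}^3$. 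The whole argument is then substitution of these numbers into the defining and nondegeneracy conditions of the transcritical and pitchfork normal forms, with two points requiring care: the value of the coefficient $f_{\lambda\lambda}$, which is not listed, and the assignment of the super/subcritical label.

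The coefficient $f_{\lambda\lambda}=f_{kk}$ is exactly what separates a transcritical bifurcation from an isolated solution, and establishing it is the step I expect to be the main obstacle. The key observation is that $\mathcal N$ carries a \emph{trivial solution branch} persisting for every $k$: since $g_-(0)=0$ and $u_B=0$, the configuration $z=0$ satisfies $\frac{\partial K}{\partial z}(0)=0$ for all $k$, so in the Lyapunov-Schmidt reduction $W(0,0,k)=0$ and $f(0,0,k)\equiv0$. Differentiating this identity twice in $k$ yields $f_{kk}=0$ at the singularity (refining $f_k=0$). With $g_-''(0)\neq0$ we have $f_{xx}\neq0$, and then the $2\times2$ Hessian of $f$ in $(x,\lambda)$ has determinant $f_{xx}f_{kk}-f_{x\lambda}^2=-r_{ij}^4<0$; together with $f=f_x=f_\lambda=0$ and $f_{xx}\neq0$ these are precisely the recognition conditions for the transcritical normal form $\epsilon(x^2-\lambda^2)$, the value $f_{kk}=0$ being what rules out the isola center $\epsilon(x^2+\lambda^2)$.

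When instead $g_-''(0)=0$ and $g_-'''(0)\neq0$, I would verify the pitchfork conditions $f=f_x=f_\lambda=f_{xx}=0$, $f_{xxx}\neq0$, $f_{x\lambda}\neq0$, all immediate from the theorem. For criticality I would avoid normal-form sign bookkeeping and instead factor the trivial branch out of $f$, so that the nontrivial solutions obey $x^2=-\frac{6f_{x\lambda}}{f_{xxx}}\lambda=-\frac{6}{g_-'''(0)r_{ij}}\lambda$. Since $r_{ij}>0$, the branch opens toward $\lambda>0$, i.e.\ $k>r_{ij}^{-1}$, exactly when $g_-'''(0)<0$, and toward $k<r_{ij}^{-1}$ when $g_-'''(0)>0$. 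By Theorem~\ref{thm: laplacian singularity} the trivial solution is stable (the restriction of $L$ to $\mathbf 1_n^\perp$, i.e.\ the Hessian of $K$, is positive definite) for $k<r_{ij}^{-1}$ and unstable for $k>r_{ij}^{-1}$; the bifurcating branch therefore emerges on the unstable side when $g_-'''(0)<0$, which is the supercritical case, and on the stable side when $g_-'''(0)>0$, the subcritical case. Finally, if $g_-''(0)=g_-'''(0)=0$ both nondegeneracy conditions $f_{xx}\neq0$ and $f_{xxx}\neq0$ fail, so neither normal form applies and the additional vanishing of $f_{xx}$ and $f_{xxx}$ forces codimension $>2$.
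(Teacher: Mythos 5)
Your proposal is correct and follows essentially the same route as the paper, whose proof simply invokes the recognition results of Golubitsky--Schaeffer (Propositions II.9.2 and II.9.3 and the Classification Theorem IV.2.1) applied to the Taylor coefficients supplied by Theorem~\ref{thm: single neg res}. Your explicit verification of $f_{kk}=0$ via the persistent trivial branch $z=0$ --- needed to obtain $\det d^2f=-f_{x\lambda}^2<0$ and thereby distinguish the transcritical normal form from the isola center --- and your stability-based derivation of the super/subcritical label are details the paper leaves implicit but which you handle correctly.
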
 
\begin{proof}
    The case $g_-''(0)\neq0$ follows from Theorem~\ref{thm: single neg res} and~\cite[Proposition~II.9.3]{Golubitsky1985}. The two cases $g_-''(0)=0$, $g_-'''(0)\neq0$, follows from Theorem~\ref{thm: single neg res} and~\cite[Proposition~II.9.2]{Golubitsky1985}. The case $g_-''(0)=g_-'''(0)=0$ follows from the Classification Theorem~\cite[Theorem~IV.2.1]{Golubitsky1985}.
\end{proof}

Theorem~\ref{thm: single neg res} and Corollary~\ref{cor: single neg res} reveal how the nonlinear properties of the negative conductance $g_-(y_{l_-})$, and in particular its derivatives computed at $y_{l_-}=0$, determine bifurcation behavior of the network at its singularity.

\section{Numerical example}

\begin{figure}
\centering
    \includegraphics[width=0.33\textwidth]{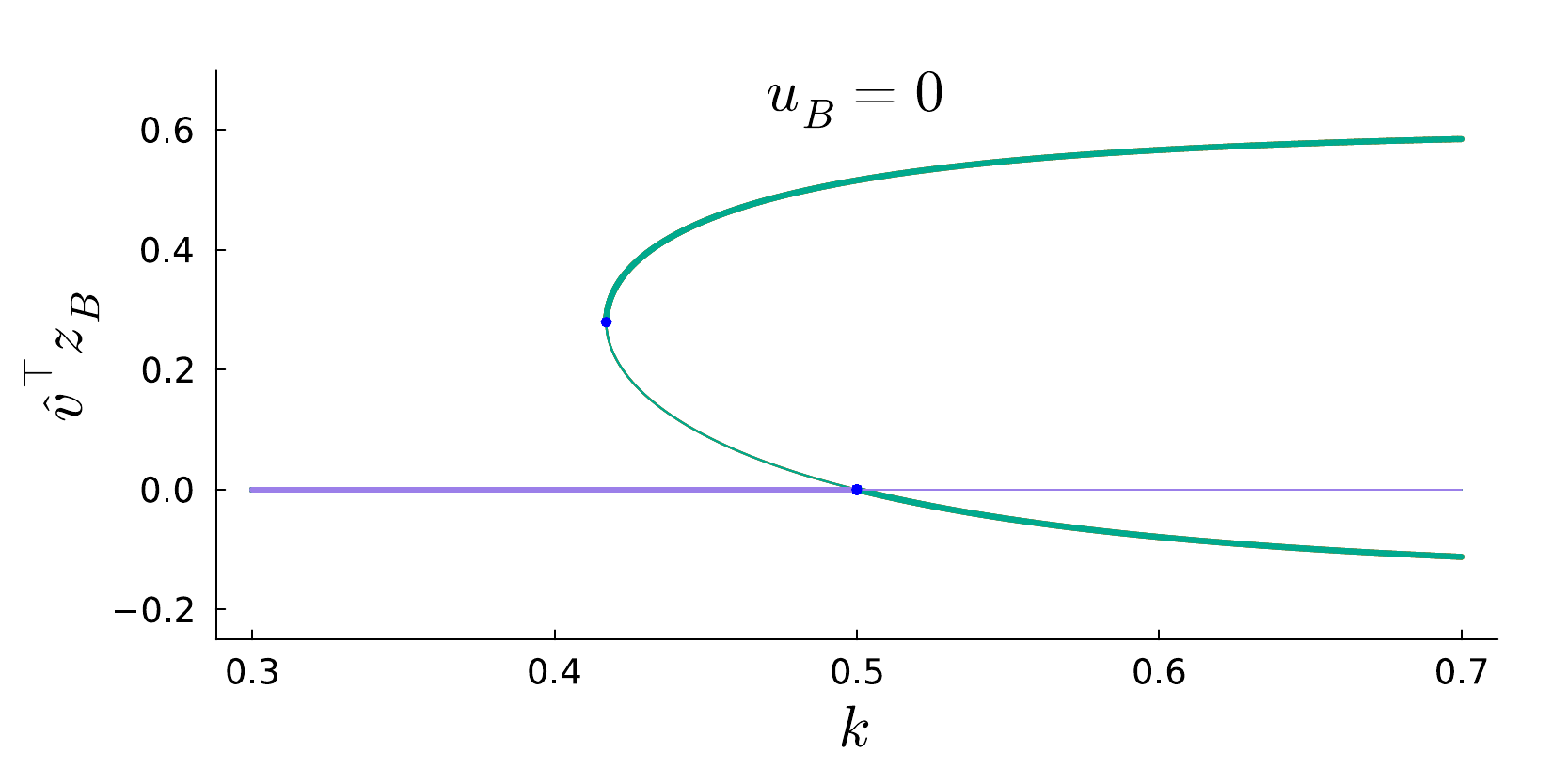}
    \includegraphics[width=0.33\textwidth]{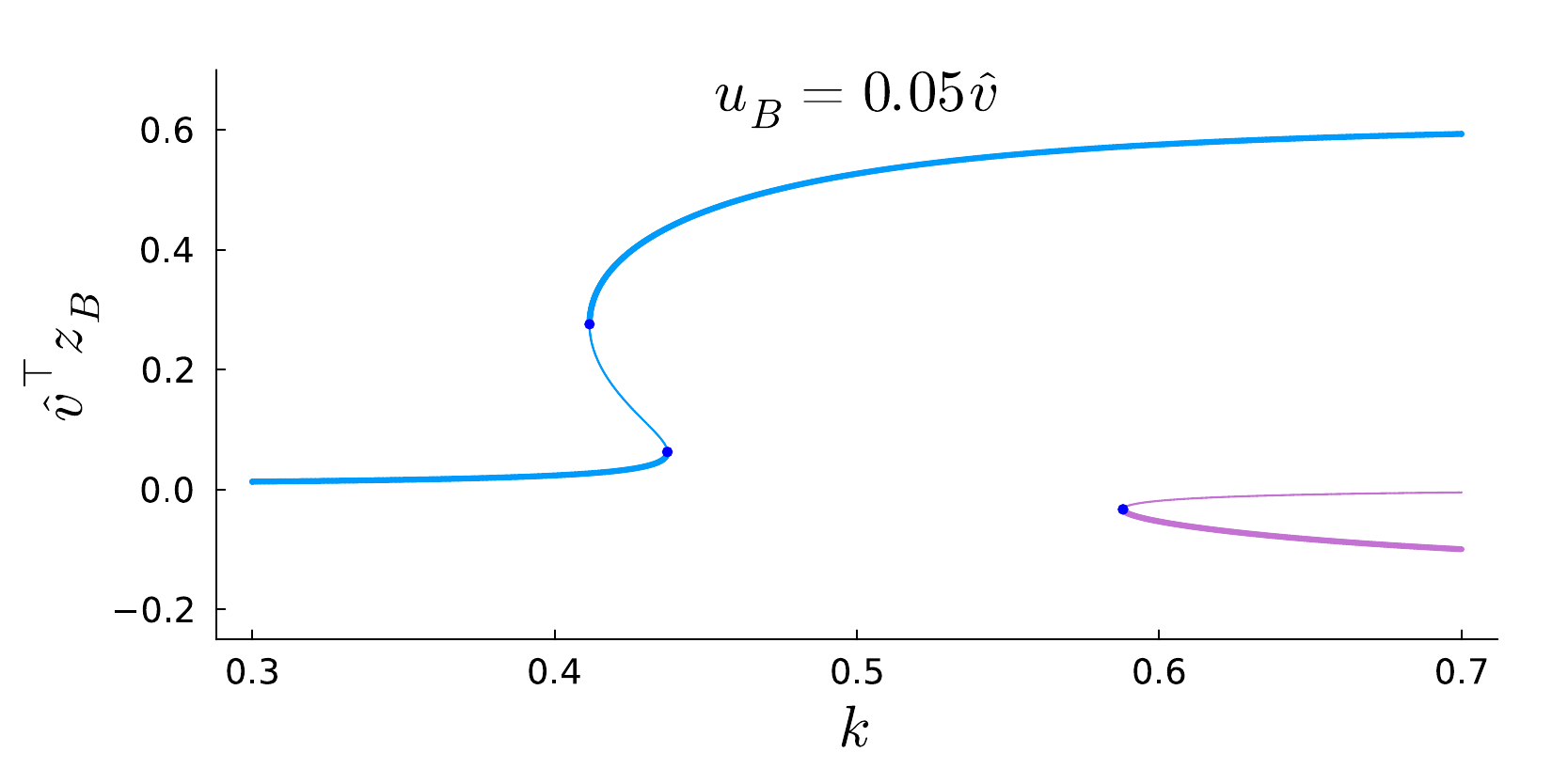}
    \includegraphics[width=0.33\textwidth]{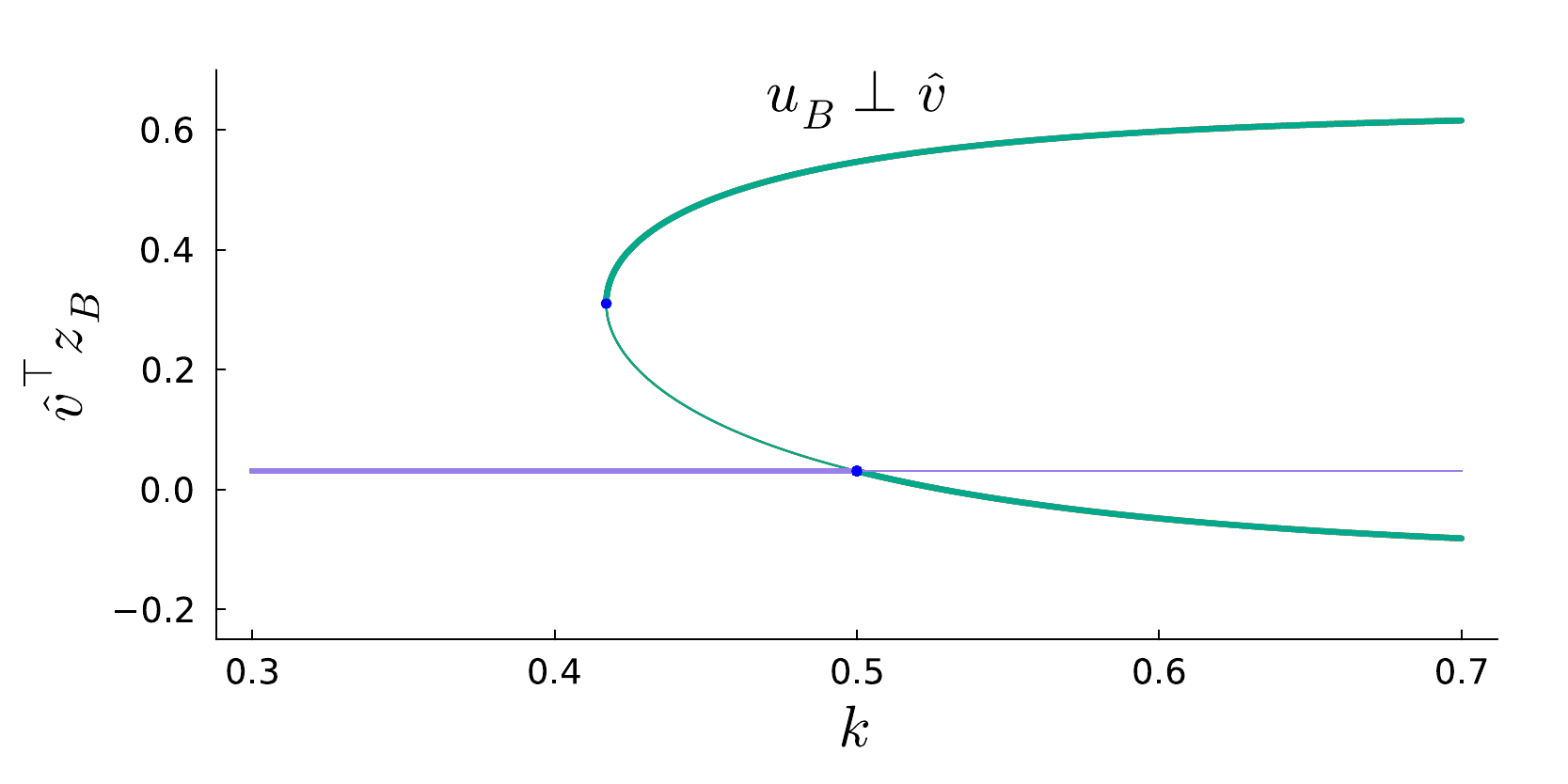}
\caption{Bifurcation diagrams (projected on $\hat v$) with respect to the negative conductance gain $k$ of the terminal behavior of the network in Figure~\ref{fig: net example} for different terminal current vectors $u_B$ and $\beta=0.5$. Branches of stable (unstable) equilibria are indicated by thick (thin) lines.} %Top: zero boundary currents. Center: small boundary current vector aligned with $\hat v$. Bottom: boundary current vector orthogonal to $\hat v$.} 
\label{fig: net bif}
\end{figure}

Consider the network in Figure~\ref{fig: net example}. A direct application of Theorem~\ref{thm: single neg res} shows that, for all $\beta\in\R$, the network is singular at $(z,u_B,k)=(0,0,1/2)$. Furthermore, if $\beta\neq0$, then $g_-''(0)=\frac{\partial^3 G_3}{\partial y_3^3}(0)\neq 0$, where $G_3(y_3)$ is defined in~\eqref{eq: neg cond potential}, hence we expect the bifurcation at $(z,u_B,k)=(0,0,1/2)$ to be transcritical. A final prediction is that, close to singularity, the network is insensitive to inputs $u_B\in \hat v^\perp$ and ultra-sensitive to inputs $u_B$ such that $\hat v^\top u_B\neq 0$. 

Figure~\ref{fig: net bif} confirms and illustrates these predictions. The bifurcation diagrams were generated in Julia BifurcationKit~\cite{veltz:hal-02902346} by assuming a small parasitic capacitance at each node. The top panel of Figure~\ref{fig: net bif} shows that when no boundary currents are applied the network exhibits a transcritical bifurcation at $k=0.5$, as predicted. At this bifurcation the ground network state $z=(z_B,\bar z_C(z_B))=(0,0)$ becomes unstable and, even in the absence of any boundary current, new stable configurations of network node potentials emerge along the critical eigenvector $\hat v$. The center panel of Figure~\ref{fig: net bif} shows how even small boundary current vectors aligned with the critical eigenvector $\hat v$ can induce major changes in the network terminal behavior. This is reflected in the markedly different bifurcation diagram as compared to the zero boundary current case. The big change in the network potential bifurcation diagram in response to small boundary current vectors illustrates the ultra-sensitivity of the network terminal behavior close to its singularities. Conversely, the bottom panel of Figure~\ref{fig: net bif} shows how even large boundary current vectors that are orthogonal to $\hat v$ have no effects on the network bifurcation diagram, which illustrates the selectivity of the network ultra-sensitive behavior.

\section{Conclusions and perspectives}

We introduced a class of models and a set of tools to pave the way towards the modeling, analysis, and synthesis of physical networks with intrinsically nonlinear, and in particular excitable, behavior at their terminals. The Lyapunov-Schmidt reduction, and its consistency with the reduced network behavior at the terminals, are the new key elements supporting the resulting nonlinear network theory.

The results and ideas we presented set the foundations for a number of generalizations and extensions.
%First of all, the focus of this paper is on networks of static conductances.
We mention some of them here.
In the same sprit as~\cite{huijzer2023synchronization}, the generalization to networks of static conductances and grounded capacitors is natural and will be presented in an extended version of this work. A more involved extension, still under development, is to consider networks with dynamic edges. In this direction, the (Henkel) operator theoretical approach introduced in~\cite{chaffey2023relaxation} seems a promising one. Finally, how these ideas apply to networks of heterogeneous components, like multi-physics systems, is an important and still largely unexplored problem, for which the port-Hamiltonian framework~\cite{van2000l2,van2024reciprocity} constitutes probably the most natural modeling substrate.

\addtolength{\textheight}{-12cm}   % This command serves to balance the column lengths
                                  % on the last page of the document manually. It shortens
                                  % the textheight of the last page by a suitable amount.
                                  % This command does not take effect until the next page
                                  % so it should come on the page before the last. Make
                                  % sure that you do not shorten the textheight too much.

%%%%%%%%%%%%%%%%%%%%%%%%%%%%%%%%%%%%%%%%%%%%%%%%%%%%%%%%%%%%%%%%%%%%%%%%%%%%%%%%

%%%%%%%%%%%%%%%%%%%%%%%%%%%%%%%%%%%%%%%%%%%%%%%%%%%%%%%%%%%%%%%%%%%%%%%%%%%%%%%%

%%%%%%%%%%%%%%%%%%%%%%%%%%%%%%%%%%%%%%%%%%%%%%%%%%%%%%%%%%%%%%%%%%%%%%%%%%%%%%%%

%\section*{Acknowledgment}

\bibliography{biblio}

\begin{thebibliography}{10}
\providecommand{\url}[1]{#1}
\csname url@rmstyle\endcsname
\providecommand{\newblock}{\relax}
\providecommand{\bibinfo}[2]{#2}
\providecommand\BIBentrySTDinterwordspacing{\spaceskip=0pt\relax}
\providecommand\BIBentryALTinterwordstretchfactor{4}
\providecommand\BIBentryALTinterwordspacing{\spaceskip=\fontdimen2\font plus
\BIBentryALTinterwordstretchfactor\fontdimen3\font minus
  \fontdimen4\font\relax}
\providecommand\BIBforeignlanguage[2]{{%
\expandafter\ifx\csname l@#1\endcsname\relax
\typeout{** WARNING: IEEEtran.bst: No hyphenation pattern has been}%
\typeout{** loaded for the language `#1'. Using the pattern for}%
\typeout{** the default language instead.}%
\else
\language=\csname l@#1\endcsname
\fi
#2}}

\bibitem{bollobas2013modern}
B.~Bollob{\'a}s, \emph{Modern graph theory}.\hskip 1em plus 0.5em minus
  0.4em\relax Springer Science \& Business Media, 2013, vol. 184.

\bibitem{amin2005realizable}
C.~S. Amin, M.~H. Chowdhury, and Y.~I. Ismail, ``Realizable reduction of
  interconnect circuits including self and mutual inductances,'' \emph{IEEE
  Transactions on Computer-Aided Design of Integrated Circuits and Systems},
  vol.~24, no.~2, pp. 271--277, 2005.

\bibitem{willems2009behavior}
J.~C. Willems and E.~I. Verriest, ``The behavior of resistive circuits,'' in
  \emph{Proceedings of the 48h IEEE Conference on Decision and Control (CDC)
  held jointly with 2009 28th Chinese Control Conference}.\hskip 1em plus 0.5em
  minus 0.4em\relax IEEE, 2009, pp. 8124--8129.

\bibitem{wood2013power}
A.~J. Wood, B.~F. Wollenberg, and G.~B. Shebl{\'e}, \emph{Power generation,
  operation, and control}.\hskip 1em plus 0.5em minus 0.4em\relax John wiley \&
  sons, 2013.

\bibitem{dorfler2012kron}
F.~Dorfler and F.~Bullo, ``Kron reduction of graphs with applications to
  electrical networks,'' \emph{IEEE Transactions on Circuits and Systems I:
  Regular Papers}, vol.~60, no.~1, pp. 150--163, 2012.

\bibitem{van2010characterization}
A.~Van~der Schaft, ``Characterization and partial synthesis of the behavior of
  resistive circuits at their terminals,'' \emph{Systems \& Control Letters},
  vol.~59, no.~7, pp. 423--428, 2010.

\bibitem{van2024kron}
A.~van~der Schaft, B.~Besselink, and A.-M. Huijzer, ``Kron reduction of
  nonlinear networks,'' \emph{IEEE Control Systems Letters}, 2024.

\bibitem{sepulchre2018excitable}
R.~Sepulchre, G.~Drion, and A.~Franci, ``Excitable behaviors,'' \emph{Emerging
  Applications of Control and Systems Theory: A Festschrift in Honor of
  Mathukumalli Vidyasagar}, pp. 269--280, 2018.

\bibitem{sepulchre2019control}
------, ``Control across scales by positive and negative feedback,''
  \emph{Annual Review of Control, Robotics, and Autonomous Systems}, vol.~2,
  no.~1, pp. 89--113, 2019.

\bibitem{chicca2014neuromorphic}
E.~Chicca, F.~Stefanini, C.~Bartolozzi, and G.~Indiveri, ``Neuromorphic
  electronic circuits for building autonomous cognitive systems,''
  \emph{Proceedings of the IEEE}, vol. 102, no.~9, pp. 1367--1388, 2014.

\bibitem{dalgaty2024mosaic}
T.~Dalgaty, F.~Moro, Y.~Demira{\u{g}}, A.~De~Pra, G.~Indiveri, E.~Vianello, and
  M.~Payvand, ``Mosaic: in-memory computing and routing for small-world
  spike-based neuromorphic systems,'' \emph{Nature Communications}, vol.~15,
  no.~1, p. 142, 2024.

\bibitem{christensen20222022}
D.~V. Christensen, R.~Dittmann, B.~Linares-Barranco, A.~Sebastian, M.~Le~Gallo,
  A.~Redaelli, S.~Slesazeck, T.~Mikolajick, S.~Spiga, S.~Menzel, \emph{et~al.},
  ``2022 roadmap on neuromorphic computing and engineering,''
  \emph{Neuromorphic Computing and Engineering}, vol.~2, no.~2, p. 022501,
  2022.

\bibitem{indiveri2011neuromorphic}
G.~Indiveri, B.~Linares-Barranco, T.~J. Hamilton, A.~v. Schaik,
  R.~Etienne-Cummings, T.~Delbruck, S.-C. Liu, P.~Dudek, P.~H{\"a}fliger,
  S.~Renaud, \emph{et~al.}, ``Neuromorphic silicon neuron circuits,''
  \emph{Frontiers in neuroscience}, vol.~5, p.~73, 2011.

\bibitem{ribar2019neuromodulation}
L.~Ribar and R.~Sepulchre, ``Neuromodulation of neuromorphic circuits,''
  \emph{IEEE Transactions on Circuits and Systems I: Regular Papers}, vol.~66,
  no.~8, pp. 3028--3040, 2019.

\bibitem{castanos2017implementing}
F.~Castanos and A.~Franci, ``Implementing robust neuromodulation in
  neuromorphic circuits,'' \emph{Neurocomputing}, vol. 233, pp. 3--13, 2017.

\bibitem{fontan2023pseudoinverses}
A.~Fontan and C.~Altafini, ``Pseudoinverses of signed laplacian matrices,''
  \emph{SIAM Journal on Matrix Analysis and Applications}, vol.~44, no.~2, pp.
  622--647, 2023.

\bibitem{chen2020spectral}
W.~Chen, D.~Wang, J.~Liu, Y.~Chen, S.~Z. Khong, T.~Ba{\c{s}}ar, K.~H.
  Johansson, and L.~Qiu, ``On spectral properties of signed laplacians with
  connections to eventual positivity,'' \emph{IEEE Transactions on Automatic
  Control}, vol.~66, no.~5, pp. 2177--2190, 2020.

\bibitem{klein1993resistance}
D.~J. Klein and M.~Randi\'c, ``Resistance distance,'' \emph{Journal of
  Mathematical Chemistry}, vol.~12, pp. 81--95, 1993.

\bibitem{zelazo2014definiteness}
D.~Zelazo and M.~B{\"u}rger, ``On the definiteness of the weighted laplacian
  and its connection to effective resistance,'' in \emph{53rd IEEE Conference
  on Decision and Control}.\hskip 1em plus 0.5em minus 0.4em\relax IEEE, 2014,
  pp. 2895--2900.

\bibitem{fiedler1973algebraic}
M.~Fiedler, ``Algebraic connectivity of graphs,'' \emph{Czechoslovak
  mathematical journal}, vol.~23, no.~2, pp. 298--305, 1973.

\bibitem{mohar1991laplacian}
B.~Mohar, Y.~Alavi, G.~Chartrand, and O.~Oellermann, ``The laplacian spectrum
  of graphs,'' \emph{Graph theory, combinatorics, and applications}, vol.~2,
  no. 871-898, p.~12, 1991.

\bibitem{thom2018structural}
R.~Thom, \emph{Structural stability and morphogenesis}.\hskip 1em plus 0.5em
  minus 0.4em\relax CRC press, 2018.

\bibitem{Golubitsky1985}
M.~Golubitsky and D.~Schaeffer, \emph{Singularities and Groups in Bifurcation
  Theory}.\hskip 1em plus 0.5em minus 0.4em\relax Springer-Verlag, 1985,
  vol.~1.

\bibitem{veltz:hal-02902346}
\BIBentryALTinterwordspacing
R.~Veltz, ``{BifurcationKit.jl},'' July 2020. [Online]. Available:
  \url{https://hal.archives-ouvertes.fr/hal-02902346}
\BIBentrySTDinterwordspacing

\bibitem{huijzer2023synchronization}
A.-M. Huijzer, A.~Van Der~Schaft, and B.~Besselink, ``Synchronization in
  electrical circuits with memristors and grounded capacitors,'' \emph{IEEE
  Control Systems Letters}, vol.~7, pp. 1849--1854, 2023.

\bibitem{chaffey2023relaxation}
T.~Chaffey, H.~J. Van~Waarde, and R.~Sepulchre, ``Relaxation systems and cyclic
  monotonicity,'' in \emph{2023 62nd IEEE Conference on Decision and Control
  (CDC)}.\hskip 1em plus 0.5em minus 0.4em\relax IEEE, 2023, pp. 1673--1679.

\bibitem{van2000l2}
A.~Van~der Schaft, \emph{L2-gain and passivity techniques in nonlinear
  control}.\hskip 1em plus 0.5em minus 0.4em\relax Springer, 2000.

\bibitem{van2024reciprocity}
A.~van~der Schaft, ``Reciprocity of nonlinear systems,'' \emph{SIAM Journal on
  Control and Optimization}, vol.~62, no.~6, pp. 3019--3041, 2024.

\end{thebibliography}
\bibliographystyle{IEEEtran}

\end{document}